\newcommand{\mysection}[1]{\section{#1}
\setcounter{equation}{0}}
\newtheorem{theorem}{Theorem}[section]
\newtheorem{corollary}[theorem]{Corollary}
\newtheorem{lemma}[theorem]{Lemma}
\newtheorem{proposition}[theorem]{Proposition}
\theoremstyle{definition}
\theoremstyle{definition}
\theoremstyle{definition}
\def\dashint{\operatorname%
{\,\,\text{\bf--}\kern-.98em\DOTSI\intop\ilimits@\!\!}}
\renewcommand{\epsilon}{\varepsilon}
\def\bR{\mathbb{R}}
\def\bH{\mathbb{H}}
\def\ff{\mathfrak{f}}
\def\fg{\mathfrak{g}}
\def\cC{\mathcal{C}}
\def\cD{\mathcal{D}}
\def\cH{\mathcal{H}}
\def\cP{\mathcal{P}}
\def\cV{\mathcal{V}}
\def\cL{\mathcal{L}}
\def\cS{\mathcal{S}}
\newcommand{\set}[1]{\left\{#1\right\}}
\newcommand{\Div}{\operatorname{div}}
\begin{document}
\title[parabolic and elliptic systems]{Boundary Gradient estimates for parabolic and elliptic systems from linear laminates}

\author[H. Dong]{Hongjie Dong}
\address[H. Dong]{Division of Applied Mathematics, Brown University,
182 George Street, Providence, RI 02912, USA}
\email{Hongjie\_Dong@brown.edu}
\thanks{H. Dong was partially supported by NSF grant number DMS-1056737.}

\author[J. Xiong]{Jingang Xiong}
\address[J. Xiong]{Beijing International Center for Mathematical Research, Peking University, Beijing 100871, China}
\email{jxiong@math.pku.edu.cn}
\thanks{J. Xiong was partially supported by the First Class Postdoctoral
Science Foundation of China (No. 2012M520002).}

\subjclass[2010]{35R05,35J55}

\keywords{Second-order systems, partially H\"older coefficients, boundary gradient estimates}

\begin{abstract}
We study boundary gradient estimates for second-order divergence type parabolic and elliptic systems in $C^{1,\alpha}$ domains. 
The coefficients and data are assumed to be H\"older in the time variable and all but one spatial variables. This type of systems
arises from the problems of linearly elastic laminates and composite materials.
\end{abstract}

\maketitle

\mysection{Introduction}
                                        \label{secIntro}

In this paper, we consider parabolic and elliptic systems:
\begin{align}
                            \label{parabolic0}
\cP u&:=-u_t+D_\alpha(A^{\alpha\beta}D_\beta u)+D_\alpha(B^{\alpha}u)+\hat B^{\alpha}D_\alpha u+ C u=\Div g + f,\\
\cL u&:=D_\alpha(A^{\alpha\beta}D_\beta u)+D_\alpha(B^{\alpha}u)+\hat B^{\alpha}D_\alpha u+ C u=\Div g + f\label{elliptic0}
\end{align}
in a $C^{1,\delta}$ domain with the zero Dirichlet boundary condition.
The coefficients of $\cP$ and $\cL$ are assumed to be bounded, and the
operators are uniformly nondegenerate.
The \textbf{aim} of our paper is to study boundary Lipschitz regularity of weak solutions when coefficients
are assumed to be regular in the time variable and all but one spatial variables. This type of system
arises from the problems of linearly elastic laminates and composite materials.
The interior regularity has been systematically studied by Chipot, Kinderlehrer, and Vergara-Caffarelli \cite{CKC}, and also recently by the first author \cite{Dong12}. Particularly, it is known that weak solutions of \eqref{parabolic0} or \eqref{elliptic0} are Lipschitz in the interior of the domain.
Application backgrounds and some interior regularity results can also be found in
 \cite{BASL, BV, LiVo, LiNi, FKNN13, LiLi, XB} and many others.

Due to the interaction from the boundary data, solutions of these systems become more irregular near the boundary so that we cannot expect to
control the $C^{\gamma}$ norm of the solutions in the spatial variables for  $\gamma$ close to $1$, even when the given boundary data are smooth. See section 5.3 of Avellaneda-Lin \cite{AL87} for more discussions.

We shall establish gradient estimates in some boundary cone region and global H\"older estimates for system \eqref{parabolic0}.
To be more precise, let us denote a typical point in ${\bR}^{d+1}$ by $z=(t,x)$, where $x=(x^1,\ldots,x^d):=(x',x^d)$ and $z':=(t,x')$. Let $\Omega$ be a $C^{1,\delta}$ domain in $\bR^d$, $0\in \partial\Omega$, and $e_d:=(0,\ldots,0,1)$  be the inner normal direction of $\partial\Omega$ at $0$.
We prove that if the coefficients and data are H\"older continuous in $z'$,
then any local weak solution $u$ to \eqref{parabolic0} near the origin with the zero boundary
condition is Lipschitz continuous with respect to all spatial variables for $x\in B_\varepsilon\cap \{x^d\ge |x'|\}$
with some small $\varepsilon>0$. Moreover, $u$ is $C^{1/2}$ in $t$ and $D_{x'}u$ and $ U:=A^{d\beta}D_\beta u+B^d u-g_d$ are
H\"older continuous in the same region; see Theorem \ref{thm1} below for a more precise statement. A similar result holds true for the elliptic system \eqref{elliptic0}.

For the proof of Theorem \ref{thm1}, we follow an argument in \cite{Dong12} which is based on Campanato's approach. The classical Campanato's
approach is to show that the mean oscillations of $Du$ in balls vanish in a certain order
as the radii of balls go to zero. Due to the lack of regularity of the coefficients and data in the $x^d$-direction, it is not possible to control the mean oscillations of the whole gradient of $u$. In fact, $D_d u$ may be discontinuous with respect to $x^d$. As in \cite{Dong12}, instead we estimate the mean oscillations of $D_{x'}u$ and $U$, and use a certain decomposition of $u$. Here an added difficulty comes from the curved boundary. The usual argument of flattening the boundary does not seem to be plausible because of the irregularity of the coefficients. To this end, we exploit an idea recently used in \cite{DK11} to locally ``cut off'' the boundary, so that the modified equation is satisfied in a domain with a piece of flat boundary and the difference between the solutions can be estimated by using the Hardy inequality. Another ingredient of our proof is a boundary $L_p$ estimate for systems with partially VMO coefficients proved in the same paper \cite{DK11}.

Notice that in general Theorem \ref{thm1} may not hold if the inner normal drection of $\partial \Omega$ at $0$ is not paralleled to $e_d$. In this case, we shall prove a H\"older regularity; see Theorem \ref{thm2}. Of independent interest we obtain a H\"older regularity for parabolic systems with coefficients of VMO in the time variable and $d-2$ spatial variables; see Proposition \ref{prop5.1}. In the proof, we again use Campanato's approach as well as an anisotropic Sobolev inequality and the reverse H\"older inequality.

The organization of this paper is as follows. In Section \ref{sec2}, we state our main theorems for divergence form systems and introduce some notation. We prove some auxiliary estimates in Section \ref{sec3}. The proofs of main theorems are given in Sections \ref{sec4} and \ref{sec5}.

\mysection{Notation and main results}
                                            \label{sec2}

We are concerned with parabolic systems
\begin{equation}
                                                \label{parabolic}
\cP u:=-u_t+D_\alpha(A^{\alpha\beta}D_\beta u)+D_\alpha(B^{\alpha}u)+\hat B^{\alpha}D_\alpha u+ C u=\Div g + f
\end{equation}
in a cylindrical domain $(-\infty,0)\times \Omega$ with the zero Dirichlet boundary condition on the lateral boundary.
The coefficients $A^{\alpha\beta}$, $B^\alpha$, $\hat B^\alpha$, and $C$ are $n \times n$ matrices,
which are bounded by a positive constant $K$, and
the leading coefficient matrices $A^{\alpha\beta}$ are strongly elliptic with ellipticity constant $\nu$:
$$
\nu|\xi|^2\le A^{\alpha\beta}_{ij}\xi^\alpha_i\xi^\beta_j,\quad |A^{\alpha\beta}|\le \nu^{-1}
$$
for any $\xi=(\xi^\alpha_i)\in \bR^{d\times n}$.
Here $g=(g_1,\ldots,g_d)$, and
\begin{align*}
u& = (u^1, \ldots, u^n)^{\text{tr}},\\
g_{\alpha}& = (g^1_\alpha, \ldots, g^n_\alpha)^{\text{tr}}\quad\text{for}\,\,\alpha=1,\ldots,d,\\
f& = (f^1,\ldots,f^n)^{\text{tr}}
\end{align*}
are (column) vector-valued functions. Throughout the paper, the summation convention
over repeated indices is used. We also consider the following elliptic system
\begin{equation}
                                                \label{elliptic}
\cL u:=D_\alpha(A^{\alpha\beta}D_\beta u)+D_\alpha(B^{\alpha}u)+\hat B^{\alpha}D_\alpha u+ C u=\Div g + f
\end{equation}
in $\Omega$ with the zero Dirichlet boundary condition.
In this case $A^{\alpha\beta}$, $B^\alpha$, $\hat B^\alpha$, $C$, $g$, and $f$
are independent of $t$ and satisfy the same conditions as in the parabolic case.
\subsection{Notation}
By $Du=(D_i u)$ and $D^{2}u=(D_{ij}u)$ we mean the gradient and the Hessian matrix
of $u$. On many occasions we need to take these objects relative to only
part of variables. We also use the following notation:
$$
D_tu=u_t,\quad D_{x'}u=u_{x'},\quad
D_{xx'}u=u_{xx'}.
$$

Set
$$
B_r'(x') = \{ y \in \bR^{d-1}: |x'-y'| < r\}, \quad
B_r(x) = \{ y \in \bR^d: |x-y| < r\},
$$
$$
Q_r'(t,x) = (t-r^2,t) \times B_r'(x'),\quad
Q_r(t,x) = (t-r^2,t) \times B_r(x),
$$
and
$$
B_r'=B_r'(0),\quad
B_r = B_r(0),\quad
Q_r'=Q_r'(0,0),\quad
Q_r=Q_r(0,0),
$$
$$
B_r^+=B_r\cap \{x^d>0\},\quad Q_r^+=Q_r\cap \{x^d>0\}.
$$
For a domain $\Omega\in \bR^d$ and $r>0$, we denote
\begin{align*}
\Omega_r(x)&=\Omega\cap B_r(x),\quad
\cD_r(z)=(t-r^2,t)\times \Omega_r(x),\\
\Gamma_r(z)&=(t-r^2,t)\times(B_r(x)\cap \partial \Omega),\\
\cC_r(t,x)&=B_r(x)\cap \{y\in \bR^d: y^d> |y'|\},\\
\cV_r(t,x)&=(t-r^2,t)\times \cC_r(t,x).
\end{align*}
As before, we use the abbreviations $\Omega_r$, $\cD_r$, $\Gamma_r$, etc.

By $N(d,p,\ldots)$ we mean that $N$ is a constant depending only
on the prescribed quantities $d, p,\ldots$.
For a (matrix-valued) function $f(t,x)$ in $\bR^{d+1}$, we set
\begin{equation*}
(f)_{\cD} = \frac{1}{|\cD|} \int_{\cD} f(t,x) \, dx \, dt
= \dashint_{\cD} f(t,x) \, dx \, dt,
\end{equation*}
where $\cD$ is an open subset in $\bR^{d+1}$ and $|\cD|$ is the
$d+1$-dimensional Lebesgue measure of $\cD$.

\subsection{Lebesgue spaces}

For $p\in (1,\infty)$, we denote $\bH^{-1}_{p}(\cD)$ to be the space consisting of all functions $u$ satisfying
$$
\inf\set{\|g\|_{L_{p}(\cD)}+\|h\|_{L_{p}(\cD)}\,|\,u=\Div g+h}<\infty.
$$
It is easily seen that $\bH^{-1}_{p}(\cD)$ is a Banach space. Naturally, for any $u\in \bH^{-1}_{p}(\cD)$, we define the norm
\begin{equation*}
\|u\|_{\bH^{-1}_{p}(\cD)}=\inf\set{\|g\|_{L_{p}(\cD)}+\|h\|_{L_{p}(\cD)}\,|\,u=\Div g+h}.
\end{equation*}
We also define
$$
\cH^{1}_{p}(\cD)=
\set{u:\,u,Du \in L_{p}(\cD),u_t\in \bH^{-1}_{p}(\cD)}.
$$
We use the abbreviations $\cH^{1}_{p}=\cH^{1}_{p}(\bR^{d+1})$, etc.

\subsection{Partially VMO and H\"older spaces}  \label{ssec2.3}
For a function $u$ in $\bR^{d+1}$, we define  its modulus of continuity $\omega_{u,z'}$ (in the mean) with respect to $z'$ by
\begin{align*}
&\omega_{u,z'}(R)\\&\,=\sup_{r\le R}\sup_{z_0\in \bR^{d+1}}\left(
\dashint_{Q_r(z_0)}\dashint_{Q_r(z_0)}|u(t,x',x^d)-u(s,y',x^d)|^2\,dy\,ds\,dx\,dt
\right)^{\frac 1 2}.
\end{align*}
We say $u$ is partially VMO with respect to $z'$ if $\omega_{u,z'}(R)\to 0$ as $R\to 0$.

For $\delta\in (0,1]$ and a function $u$ in $\cD\subset\bR^{d+1}$, we denote its $C^{\delta/2,\delta}$ semi-norm by
$$
[u]_{\delta/2,\delta;\cD}:=\sup_{\substack{(t,x),(s,y)\in \cD\\ (t,x)\neq (s,y)}}\frac {|u(t,x)-u(s,y)|}{|t-s|^{\delta/2}+|x-y|^\delta},
$$
and its $C^{\delta/2,\delta}$ norm by
$$
|u|_{\delta/2,\delta;\cD}:=[u]_{\delta/2,\delta;\cD}+
| u|_{0;\cD},
$$
where $|u|_{0;\cD}=\sup_{\cD}|u|$.

We define a partial H\"older semi-norm with respect to $z'$ as
\[
[u]_{z',\delta/2,\delta;\cD}:=\sup_{\substack{(t,x),(s,y)\in \cD\\x^d=y^d,(t,x)\neq (s,y)}}\frac {|u(t,x)-u(s,y)|}{|t-s|^{\delta/2}+|x-y|^\delta},
\]
and the corresponding norm as
\[
|u|_{z',\delta/2,\delta;\cD}:=[u]_{z',\delta/2,\delta;\cD}+|u|_{0;\cD}.
\]
By $C^{\delta/2,\delta}_{z'}(\cD)$ we denote the set of all bounded measurable functions $u$ on $\cD$ for which $[u]_{z', \delta/2,\delta;\cD}<\infty$. In the time-independent case, we define $[\cdot]_{x',\delta}$, $|\cdot|_{x',\delta}$, and the space $C^\delta_{x'}$ in a similar fashion.

To get a global H\"older estimate, we also deal with coefficients which are measurable in two directions. Denote $x''=(x^1,\ldots,x^{d-2})$ and $z''=(t,x'')$. For a function $u$ in $\bR^{d+1}$, we define its modulus of continuity $\omega_{u,z''}$ (in the mean) with respect to $z''$ by
\begin{multline*}
\omega_{u,z''}(R)
=\sup_{r\le R}\sup_{z_0\in \bR^{d+1}}\Big(\dashint_{Q_r(z_0)}\dashint_{Q_r(z_0)}|u(t,x'',x^{d-1},x^d)\\
-u(s,y'',x^{d-1},x^d)|^2\,dy\,ds\,dx\,dt
\Big)^{\frac 1 2}.
\end{multline*}
We say that $u$ is partially VMO with respect to $z''$ if $\omega_{u,z''}(R)\to 0$ as $R\to 0$.
We remark that by the triangle inequality and the H\"older inequality, for any $z_0\in \bR^{d+1}$ and $R>0$ we can find $\bar u=\bar u(x^{d-1},x^d)$ such that
\begin{equation*}
\dashint_{Q_R(z_0)}|u(t,x'',x^{d-1},x^d)-
\bar u(x^{d-1},x^d)|\,dx\,dt\le N(d)\,\omega_{u,z''}(\sqrt 2 R).
\end{equation*}

\subsection{Main results}

We state the main results of the paper concerning divergence form parabolic systems. In the sequel, we assume that $\Omega$ is a $C^{1,\delta}$ domain in $\bR^d$ for some $\delta\in (0,1)$, $0\in \partial \Omega$, and $e_d=(0,\ldots,0,1)$ is the inner normal direction of $\partial\Omega$ at $0$. The theorem reads that if the coefficients and data are partially H\"older continuous in $z'$, then any weak solution $u$ to \eqref{parabolic} is Lipschitz in all spatial variables and $1/2$-H\"older in $t$ in a cone with vertex at the origin.

\begin{theorem}
                            \label{thm1}
Let $\delta\in (0,1)$, $A\in C_{z'}^{\delta/2,\delta}$, $B\in C_{z'}^{\delta/2,\delta}$, $f\in L_\infty(\cD_1)$, and $g\in C_{z'}^{\delta/2,\delta}(\cD_1)$. Let $u$ be a weak solution to \eqref{parabolic} in $\cD_1$ and $u=0$ on $\Gamma_1$. Then there exists a constant $\varepsilon=\varepsilon(\Omega)>0$ such that $\cV_\varepsilon\subset \cD_1$, $u\in C^{1/2,1}(\cV_\varepsilon)$, $D_{x'} u, U\in C^{\delta/2, \delta}(\cV_\varepsilon)$ and we have
\begin{equation*}
|u|_{1/2,1;\cV_\varepsilon} +|D_{x'}u|_{\delta/2,\delta; \cV_\varepsilon} +|U|_{\delta/2,\delta; \cV_\varepsilon}
\le N(|g|_{z',\delta/2,\delta;\cD_{1}}+|f|_{0;\cD_{1}}
+\|u\|_{L_2(\cD_1)}),
\end{equation*}
where $N=N(d,n,\delta,\nu,K,\Omega,[A]_{z',\delta/2,\delta},[B]_{z',\delta/2,\delta})$, and
\[
U=A^{d\beta }D_\beta u+ B^d u-g_d.
\]
\end{theorem}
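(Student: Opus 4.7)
The proof will follow the Campanato-type scheme of \cite{Dong12} adapted to the boundary via the cutoff technique of \cite{DK11}. Two obstructions dictate the strategy. First, the coefficients are merely measurable in $x^d$, so freezing in $x^d$ is impossible; one must work with quantities, namely $D_{x'}u$ and the conormal $U$, that can be controlled without regularity of $D_d u$ in $x^d$. Second, the boundary is curved with only $C^{1,\delta}$ regularity, and the usual flattening change of variables would destroy the partial H\"older regularity of the coefficients.

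For localization, near $0$ the boundary $\partial\Omega$ is a graph $x^d = \phi(x')$ with $\phi(0)=0$, $D\phi(0)=0$, and $\|\phi\|_{C^{1,\delta}}\le M$; choose $\varepsilon$ small enough that $\cC_r\subset\Omega$ for $r\le\varepsilon$. Rather than flatten, I would extend $u$ by zero across $\partial\Omega$ and modify the equation so that the extension $\tilde u$ satisfies a parabolic system on the flat cylinder $Q_r^+$. The resulting perturbation is supported in the narrow strip $B_r^+\setminus\cC_r$, whose vertical width is comparable to $|x'|^{1+\delta}$, and is controlled via a Hardy inequality exploiting $u|_{\partial\Omega}=0$.

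At each scale $r$ I would decompose $u=v+w$ on the flat half-cylinder, with $v$ solving the system whose coefficients and data are averaged in $z'$ at scale $r$ (keeping their $x^d$ dependence) with boundary data matching $u$. The coefficients in the equation for $v$ are independent of $z'$, hence partially VMO in $z'$, so the boundary $L_p$ theory of \cite{DK11} applies. Differentiating in $z'$ shows that $D_{x'}v$ and the corresponding $V=\bar A^{d\beta}D_\beta v+\bar B^d v-\bar g_d$ inherit the same structure, which yields by a standard Liouville-type argument a decay estimate
\[
\dashint_{\cV_\rho}\bigl(|D_{x'}v-(D_{x'}v)_{\cV_\rho}|^2+|V-(V)_{\cV_\rho}|^2\bigr)\le N(\rho/r)^{2}\dashint_{\cV_r}\bigl(|D_{x'}v|^2+|V|^2\bigr).
\]
Meanwhile $w=u-v$ carries a forcing of order $r^\delta$ coming from the partial H\"older oscillation of the coefficients and data in $z'$, together with the Hardy cutoff term; a Caccioppoli-type estimate for $w$ combined with the decay for $v$ gives
\[
\Phi(\theta r)\le N\theta^{2}\Phi(r)+N r^{2\delta},
\]
with $\Phi(r):=\dashint_{\cV_r}\bigl(|D_{x'}u-(D_{x'}u)_{\cV_r}|^2+|U-(U)_{\cV_r}|^2\bigr)$. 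Iterating yields $\Phi(r)\le Nr^{2\delta}$, so Campanato's characterization produces $(D_{x'}u,U)\in C^{\delta/2,\delta}(\cV_\varepsilon)$. Solving for $D_d u$ from the definition of $U$ using invertibility of $A^{dd}$ then recovers the Lipschitz bound on $u$ in $x$; the $C^{1/2}$ regularity in $t$ follows from the equation.

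The main obstacle is the combined cutoff/decomposition step: the boundary modification and the coefficient freezing must simultaneously produce scale-invariant errors of order $r^\delta$, with the former controlled via Hardy (using $\phi\in C^{1,\delta}$) and the latter falling under the partially VMO boundary $L_p$ theory on a flat domain. A subsidiary subtlety is that $D_{x'}u$ and $U$, rather than the full $Du$, are the correct Campanato quantities: since $D_du$ may be discontinuous in $x^d$ we cannot expect decay of the mean oscillation of $Du$, but the pair $(D_{x'}u,U)$ encodes enough information to recover $D_du$ pointwise through ellipticity of $A^{dd}$.
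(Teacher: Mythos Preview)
Your outline matches the paper's argument: Campanato iteration on the pair $(D_{x'}u,U)$, reduction to a flat boundary via the cutoff device of \cite{DK11} with the Hardy inequality controlling the error, freezing coefficients in $z'$, a good/bad decomposition in which the homogeneous piece enjoys the decay of Lemma~\ref{lem3.2} (from \cite{Dong12}) and the remainder is $O(R^{\delta})$ in energy, and finally the $C^{1/2}$-in-$t$ bound recovered from the equation (Lemma~\ref{lem4.9}). One technical point deserves care: the paper does not literally extend $u$ by zero across $\partial\Omega$---that extension satisfies no clean equation on $Q_r^+$, since the weak formulation against test functions not vanishing on $\partial\Omega$ picks up the conormal jump there---but instead multiplies by a smooth one-variable cutoff $\chi(x^d)$ vanishing for $x^d\le\mu R^{1+\delta}$, so that $\hat u=\chi u$ lives on the shifted half-cylinder $\hat\cD_R=\cD_R\cap\{x^d>\mu R^{1+\delta}\}$, which is entirely interior to $\Omega$ and has a genuinely flat lateral boundary; the commutator terms $h_1,h_2$ generated by $\chi$ are precisely what the Hardy inequality handles. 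The iteration is then run on the family $\hat\cD_r$ (not directly on the cones $\cV_r$), and the resulting decay at the vertex is afterward combined with the interior estimate of \cite{Dong12} at each $z_0\in\overline{\cV_\varepsilon}$ to cover the full cone; your $v$ and $w$ are swapped relative to the paper's, which is purely cosmetic.
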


It follows from \cite{DK11} (see also Corollary \ref{cor3.1}  below) that for any $\gamma\in (0,1)$ there exist a
constant $\varepsilon_1=\varepsilon_1(d,n,\nu,\Omega,\gamma)$ such that $u\in C^{\gamma/2,\gamma}(\cD_{\varepsilon_1})$ and
\begin{equation}
             \label{eq:flatholder}
|u|_{\gamma/2,\gamma;\cD_{\varepsilon_1}}
\le N(\|g\|_{L_\infty(\cD_1)}+\|f\|_{L_\infty(\cD_1)}+\|u\|_{L_2(\cD_1)}),
\end{equation}
where $N=N(d,n,\nu,\Omega,K,\omega_{A,z'},\gamma)$. For this estimate, instead of the H\"older continuity we only require the leading coefficients $A^{\alpha\beta}$ to be partially VMO with respect to $z'$.

Next we fix a boundary point $x_0\in \partial\Omega$ such that the inner normal direction of $\partial\Omega$ at $x_0$ is not parallelled to $e_d$. Without loss of generality, we may assume that the normal direction lies in the plane spanned by $e_{d-1}$ and $e_d$. In this case, although in general $u$ is not Lipschitz near $z_0:=(0,x_0)$, we prove that it is H\"older continuous in a neighborhood of $z_0$. For scalar equations, this is a simple consequence of the De Giorgi--Nash--Moser estimate, for which no regularity assumptions on $A$ are needed.

\begin{theorem}
                            \label{thm2}
Suppose that $A$ is partially VMO with respect to $z''$, $g\in L_{p_0}(\cD_1(z_0))$ for some $p_0>d+2$, and $f\in L_{p_0^*}(\cD_1(z_0))$, where
$$
p_0^*=p_0(d+2)/(p_0+d+2).
$$
Let $u$ be a weak solution to \eqref{parabolic} in $\cD_1(z_0)$ and $u=0$ on $\Gamma_1(z_0)$. Then there exist constants $\delta_1=\delta_1(d,n,\nu,p_0)\in (0,1)$ and $\varepsilon=\varepsilon(\Omega)\in (0,1)$ such that for any $\delta\in (0,\delta_1)$, we have $u\in C^{\delta/2,\delta}(\cD_\varepsilon(z_0))$. Moreover, we have
\begin{equation}
                                    \label{eq9.24}
|u|_{\delta/2,\delta;\cD_\varepsilon(z_0)}
\le N(\|g\|_{L_{p_0}(\cD_1(z_0))}+\|f\|_{L_{p_0^*}(\cD_1(z_0))}
+\|u\|_{L_2(\cD_1(z_0))}),
\end{equation}
where $N=N(d,n,\delta,\nu,K,p_0,\Omega,x_0,\omega_{A,z''})$.
\end{theorem}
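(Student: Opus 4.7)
The plan is to reduce Theorem \ref{thm2} to Proposition \ref{prop5.1} through a change of variables that respects the partial VMO structure in $z''$. First I translate so that $x_0=0$ and apply a rotation in the $(x^{d-1},x^d)$-plane to align the inner normal at $0$ with $e_d$; since this rotation fixes every variable in $z''=(t,x^1,\ldots,x^{d-2})$, the partial VMO modulus $\omega_{A,z''}$ is preserved (with the same modulus and constants). In the adapted coordinates $\partial\Omega$ is locally the graph $\{x^d=\phi(x')\}$ of some $\phi\in C^{1,\delta}$ with $\phi(0)=0$, $\nabla\phi(0)=0$, and $\|\phi\|_{C^{1,\delta}}$ controlled by $\Omega$. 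I then perform the standard bi-Lipschitz flattening $y^i=x^i$ for $i<d$ and $y^d=x^d-\phi(x^1,\ldots,x^{d-1})$, which converts \eqref{parabolic} into a new divergence-form system $\widetilde\cP\widetilde u=\Div\widetilde g+\widetilde f$ on a cylindrical neighborhood of $0$ in the half-space $\{y^d>0\}$, with zero Dirichlet data on $\{y^d=0\}$.

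Next I would verify the structural conditions on the transformed system. The new leading coefficients $\widetilde A$ are algebraic combinations of $A$ (evaluated at a vertically shifted point $y^d+\phi$) with entries of $\nabla\phi$. The transformation leaves $x^1,\ldots,x^{d-2}$ untouched, and near $0$ the shift $\phi(x^1,\ldots,x^{d-1})$ induces a perturbation in the $x^d$-slot of size $O(r^{1+\delta})$ on a cylinder of radius $r$ (since $\nabla\phi(0)=0$ and $\nabla\phi\in C^{0,\delta}$). Combining this with the $L^1$-continuity of translations and the Hölder regularity of $\nabla\phi$, one obtains
\[
\omega_{\widetilde A,z''}(r)\le \omega_{A,z''}(r)+\text{(translate modulus of $A$ in $x^d$ at scale $r^{1+\delta}$)}+N r^\delta,
\]
and each term tends to $0$ as $r\to 0$; thus $\widetilde A$ is partially VMO in $z''$. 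The transformed lower-order coefficients, and the data $\widetilde g\in L_{p_0}$, $\widetilde f\in L_{p_0^*}$, inherit norms comparable to the originals because the Jacobian is uniformly bounded away from $0$ and $\infty$ and $|\nabla\phi|$ is bounded. Applying Proposition \ref{prop5.1} to $\widetilde u$ at the flat boundary point $0$ then produces $\delta_1=\delta_1(d,n,\nu,p_0)\in(0,1)$ and a $C^{\delta/2,\delta}$ bound for $\widetilde u$ on a small half-cylinder for every $\delta\in(0,\delta_1)$. Pulling back through the bi-Lipschitz change of variables yields \eqref{eq9.24} on $\cD_\varepsilon(z_0)$ with $\varepsilon$ depending only on $\Omega$.

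The main obstacle I anticipate is the quantitative verification that $\widetilde A$ inherits the partial VMO modulus in $z''$ from $A$, that is, that the $(x^1,\ldots,x^{d-1})$-dependent vertical shift composed with $A$ only adds an additive perturbation to $\omega_{A,z''}$ going to $0$ with $r$. A secondary, related concern is that Proposition \ref{prop5.1} must be formulated (or extended) to apply at the flat boundary $\{y^d=0\}$ of the half-space rather than only at interior points; if only the interior version is available, an odd-reflection or extension argument across $\{y^d=0\}$, exploiting the vanishing trace of $\widetilde u$ and preserving both the divergence-form structure and the partial VMO property of $\widetilde A$, will be needed before invoking it.
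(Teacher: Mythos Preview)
Your proposal is correct and follows essentially the same route as the paper: an orthogonal change of coordinates in the $(e_{d-1},e_d)$-plane that fixes $x''=(x^1,\ldots,x^{d-2})$, followed by the $C^{1,\delta}$ flattening $y^d=\bar y^d-\phi(\bar y')$ and an appeal to the half-space version of Proposition~\ref{prop5.1} (which the paper records as Corollary~\ref{cor5.2}, obtained precisely by the odd/even reflection across $\{y^d=0\}$ that you anticipate). The only cosmetic difference is that the paper, instead of rotating the normal to $e_d$, sets $\bar y^{d-1}=x^d-x_0^d$ and takes $\bar y^d$ along $e_{d-1}$, so the flattening is in the $e_{d-1}$-direction and the original $x^d$ is preserved exactly as a coordinate; the VMO-in-$z''$ preservation issue you flag is the same in both setups and the paper does not spell it out either.
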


Combining \eqref{eq:flatholder}, \eqref{eq9.24}, and the corresponding interior estimates in \cite{Dong12}, we conclude that $u$ is globally H\"older continuous provided that $A\in C_{z'}^{\delta/2,\delta}$, $B\in C_{z'}^{\delta/2,\delta}$, $f\in L_\infty$, and $g\in C_{z'}^{\delta/2,\delta}$.

By modifying the proofs of Theorems  \ref{thm1} and \ref{thm2}, we also obtain the corresponding results for the elliptic system \eqref{elliptic}. In fact, Corollary \ref{cor1e} below is a simple consequence of Theorem \ref{thm1} by viewing the solutions to the elliptic systems as steady state solutions to the corresponding parabolic systems. We leave the details to the interested reader.

\begin{corollary}
                            \label{cor1e}
Let $\delta\in (0,1)$, $A\in C_{x'}^{\delta}$, $B\in C_{x'}^{\delta}$, $f\in L_\infty(\Omega_1)$, and $g\in C_{x'}^{\delta}(\Omega_1)$.  Let $u$ be a weak solution to \eqref{elliptic} in $\Omega_1$ and $u=0$ on $B_1\cap \partial \Omega$. Then there exists a constant $\varepsilon=\varepsilon(\Omega)>0$ such that $\cC_\varepsilon\subset \Omega_1$, $u\in C^1(\cC_\varepsilon)$, and we have
\begin{equation*}
|u|_{1;\cC_\varepsilon}
+|D_{x'}u|_{\delta;\cC_\varepsilon}+|U|_{\delta;\cC_\varepsilon}
\le N(|g|_{x',\delta;\Omega_1}+|f|_{0;\Omega_1}
+\|u\|_{L_2(\Omega_1)}),
\end{equation*}
where $N=N(d,n,\delta,\nu,K,\Omega,[A]_{x',\delta},[B]_{x',\delta})$ and $U$ is defined in Theorem \ref{thm1}.
\end{corollary}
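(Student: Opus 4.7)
The plan is to obtain Corollary \ref{cor1e} as a direct specialization of Theorem \ref{thm1}, by regarding the elliptic solution $u$ as a time-independent (steady-state) solution of the associated parabolic system. Define
\[
\tilde u(t,x) := u(x)\qquad\text{on }\cD_1 := (-1,0)\times \Omega_1,
\]
and extend every coefficient $A^{\alpha\beta}, B^\alpha, \hat B^\alpha, C$ and datum $g, f$ trivially to be independent of $t$. Since $\tilde u_t\equiv 0$, a direct check of the weak formulation (the test term $\int \tilde u\, \phi_t$ vanishes for $\phi\in C_c^\infty(\cD_1)$ by Fubini, and the remaining terms reduce to the elliptic weak form integrated over $t\in(-1,0)$) shows that $\tilde u$ is a weak solution of $\cP\tilde u = \Div g+f$ in $\cD_1$ that vanishes on $\Gamma_1=(-1,0)\times(B_1\cap\partial\Omega)$.

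Next I would verify that the parabolic hypotheses of Theorem \ref{thm1} are satisfied. For any function $h(x)$ that depends only on the spatial variables, the partial H\"older semi-norms collapse to the spatial ones: since pairs with $x^d=y^d$ contribute solely through $|x'-y'|$ and the time-only contribution $|t-s|^{\delta/2}$ has $h(\cdot,x)-h(\cdot,x)=0$, one has
\[
[h]_{z',\delta/2,\delta;\cD_1}=[h]_{x',\delta;\Omega_1}.
\]
Hence $A,B\in C_{z'}^{\delta/2,\delta}(\cD_1)$, $g\in C_{z'}^{\delta/2,\delta}(\cD_1)$, and $f\in L_\infty(\cD_1)$, and the $C_{z'}^{\delta/2,\delta}$ semi-norms of the coefficients coincide with their $C_{x'}^\delta$ counterparts, so the constant $N$ produced by Theorem \ref{thm1} depends on exactly the quantities listed in Corollary \ref{cor1e}.

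Applying Theorem \ref{thm1} to $\tilde u$ yields $\varepsilon=\varepsilon(\Omega)>0$ with $\cV_\varepsilon\subset\cD_1$ and an estimate for $|\tilde u|_{1/2,1;\cV_\varepsilon}+|D_{x'}\tilde u|_{\delta/2,\delta;\cV_\varepsilon}+|\tilde U|_{\delta/2,\delta;\cV_\varepsilon}$. The final step is the translation of these parabolic norms back to elliptic ones. For time-independent $\tilde u$ the time-H\"older seminorms vanish, so $|\tilde u|_{1/2,1;\cV_\varepsilon}=|u|_{1;\cC_\varepsilon}$; the same identification applies to $D_{x'}\tilde u$ and $\tilde U=A^{d\beta}D_\beta \tilde u+B^d\tilde u-g_d$ (which is itself independent of $t$). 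On the right-hand side, $|g|_{z',\delta/2,\delta;\cD_1}=|g|_{x',\delta;\Omega_1}$, $|f|_{0;\cD_1}=|f|_{0;\Omega_1}$, and by Fubini $\|\tilde u\|_{L_2(\cD_1)}^2=\|u\|_{L_2(\Omega_1)}^2$. Substituting these identities into the parabolic estimate delivers precisely the claimed inequality, and the inclusion $\cC_\varepsilon\subset\Omega_1$ follows from $\cV_\varepsilon\subset\cD_1$.

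There is no genuine obstacle here; the entire content is a careful bookkeeping of how the partial-H\"older norms and the cone region $\cV_\varepsilon$ degenerate when everything is time-independent. The only mild points to be careful about are (i) confirming the admissibility of $\tilde u$ as an element of the parabolic solution class $\cH^1_p(\cD_1)$, which is immediate from $u\in H^1(\Omega_1)$ and Fubini, and (ii) checking that the constant $\varepsilon(\Omega)$ produced by Theorem \ref{thm1} depends only on the domain and not on the time extension, which is clear since $\varepsilon$ was chosen so that the spatial cone $\cC_\varepsilon$ sits inside $\Omega_1$. An alternative route would be to rerun the Campanato-type proof of Theorem \ref{thm1} in the purely spatial setting, but the reduction above is strictly easier and is the one the authors indicate.
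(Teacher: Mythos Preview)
Your proposal is correct and follows exactly the approach the paper indicates: the authors state that Corollary \ref{cor1e} ``is a simple consequence of Theorem \ref{thm1} by viewing the solutions to the elliptic systems as steady state solutions to the corresponding parabolic systems'' and leave the details to the reader. Your bookkeeping of how the partial H\"older norms, the cone region, and the $L_2$ norm collapse under time-independence is precisely what is required to fill in those details.
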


\begin{corollary}
                            \label{cor2e}
Suppose that $A$ is partially VMO with respect to $x''$, $g\in L_{p_0}(\Omega_1(x_0))$ for some $p_0>d$, and $f\in L_{p_0^*}(\Omega_1(x_0))$, where
$$
p_0^*=p_0 d/(p_0+d).
$$
Let $u$ be a weak solution to \eqref{elliptic} in $\Omega_1(x_0)$  and $u=0$ on $B_1(x_0)\cap \partial \Omega$. Then there exist constants $\delta_1=\delta_1(d,n,\nu)\in (0,1)$ and $\varepsilon=\varepsilon(\Omega)\in (0,1)$ such that for any $\delta\in (0,\delta_1)$, we have $u\in C^{\delta}(\Omega_\varepsilon(x_0))$. Moreover, we have
\begin{equation*}
|u|_{\delta;\Omega_\varepsilon(x_0)}
\le N(\|g\|_{L_{p_0}(\Omega_1(x_0))}+\|f\|_{L_{p_0^*}(\Omega_1(x_0))}
+\|u\|_{L_2(\Omega_1(x_0))}),
\end{equation*}
where $N=N(d,n,\delta,\nu,K,p_0,\Omega,x_0,\omega_{A,x''})$.
\end{corollary}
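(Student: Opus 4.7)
The plan is to adapt the proof of Theorem~\ref{thm2} to the elliptic setting. The naive reduction ``view $u$ as a steady-state parabolic solution on $(-1,0)\times\Omega_1(x_0)$'' does not quite work, because the parabolic Theorem~\ref{thm2} requires $p_0>d+2$ while here we only assume $p_0>d$; under the natural lift, an elliptic $L_{p_0}$ bound with $p_0>d$ only yields a parabolic $L_{p_0}$ bound with the same exponent, which is below the parabolic threshold. Instead, I would prove an elliptic analog of Proposition~\ref{prop5.1} and then repeat the boundary-flattening argument of Section~\ref{sec5}.

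The steps I would carry out are the following. \emph{Step (i)}: Prove an elliptic version of Proposition~\ref{prop5.1}, namely, for $\cL u=\Div g+f$ in $B_R^+$ with zero Dirichlet data on $B_R\cap\{x^d=0\}$, leading coefficients partially VMO in $x''=(x^1,\ldots,x^{d-2})$, and $g\in L_{p_0}$, $f\in L_{p_0^*}$ with $p_0>d$, one has $u\in C^\delta(B_{R/2}^+)$ for some $\delta=\delta(d,n,\nu)>0$, with the appropriate estimate. The proof is the Campanato-type decomposition $u=u_1+u_2$: $u_1$ solves the homogeneous system with coefficients averaged over $x''$ on a small ball, and $u_2$ is handled by the elliptic boundary $L_p$-theory for partially VMO coefficients (the elliptic counterpart of the result from \cite{DK11}). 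For $u_1$, I would use the anisotropic Sobolev embedding in the $x''$-direction together with a Meyers-type reverse H\"older inequality to upgrade the $L_2$ control on $Du_1$ to $L_{2+\epsilon}$, and then Morrey's embedding feeds back a decay estimate on the mean oscillations.

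\emph{Step (ii)}: Flatten the boundary near $x_0$. Since the inner normal at $x_0$ lies in the $(e_{d-1},e_d)$-plane, one first rotates in this plane (keeping the $x''$-coordinates fixed) to align the normal with $e_d$, and then applies a diffeomorphism $\Phi$ of a small neighborhood of $x_0$ of the form $\Phi(y)=(y^1,\ldots,y^{d-2},\phi_{d-1}(y),\phi_d(y))$ straightening $\partial\Omega$ to $\{y^d=0\}$. Because $\partial\Omega\in C^{1,\delta}$, both $\Phi$ and $\Phi^{-1}$ are $C^{1,\delta}$, so $D\Phi\in C^\delta$ is in particular partially VMO in $y''$; thus the transformed leading coefficients remain partially VMO in $y''$, and the transformed lower-order coefficients and data remain bounded (resp.\ in the correct Lebesgue spaces) with equivalent norms. \emph{Step (iii)}: Apply the half-space result from step (i) to the transformed problem, and pull back via $\Phi^{-1}$ to obtain the conclusion in $\Omega_\varepsilon(x_0)$ for suitably small $\varepsilon=\varepsilon(\Omega)$.

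The main obstacle is step (i): executing the Campanato iteration sharply with $p_0$ only strictly above $d$ (and $f$ only in $L_{p_0^*}$), since pointwise bounds on the data are not available and one must combine an integrated Morrey-type estimate with the reverse H\"older inequality to close the decay step. A secondary point, essentially book-keeping, is verifying in step (ii) that the flattening preserves the partial VMO property in $x''$; this is precisely where the hypothesis that the normal at $x_0$ lies in $\mathrm{span}(e_{d-1},e_d)$ is used, as it allows a flattening diffeomorphism that touches only the last two coordinates and so does not mix the $x''$ variables with $(x^{d-1},x^d)$.
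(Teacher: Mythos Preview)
Your proposal is correct and matches the paper's intended approach: the paper explicitly says the elliptic corollaries are obtained ``by modifying the proofs of Theorems~\ref{thm1} and~\ref{thm2}'' and leaves the details to the reader; your observation that the steady-state reduction fails for Corollary~\ref{cor2e} (because the parabolic threshold is $p_0>d+2$ while the elliptic one is $p_0>d$) is exactly why the paper singles out only Corollary~\ref{cor1e} as a direct consequence of the steady-state trick.

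Two minor points of comparison. First, in the parabolic proof the paper establishes the interior estimate (Proposition~\ref{prop5.1}) and then obtains the half-space version (Corollary~\ref{cor5.2}) by odd/even extension; you propose proving the half-space case directly. Either route works, but the extension trick is slightly cleaner since it lets you recycle the interior energy and Caccioppoli estimates without boundary terms. Second, your flattening in step~(ii) differs from the paper's: rather than rotating in the $(e_{d-1},e_d)$-plane to align the normal with $e_d$, the paper relabels coordinates so that the original $x^d$-axis becomes the new $\bar y^{d-1}$-axis and then writes $\partial\Omega$ as a graph $\bar y^d=\phi(\bar y^1,\ldots,\bar y^{d-1})$ (this uses precisely that the normal is \emph{not} parallel to $e_d$); the flattening $y^d=\bar y^d-\phi$ then leaves the $x''$ coordinates untouched. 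Your version is equivalent in spirit, but the paper's relabeling makes the bookkeeping for the partial VMO property in $y''$ a bit more transparent. Also, in your step~(i) the error piece $u_2$ is handled in the paper not by boundary $L_p$-theory but by the energy inequality combined with the reverse H\"older estimate for $Du$ and the VMO smallness of $A-\bar A$; this is a small technical difference that does not affect the outcome.
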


\mysection{Some auxiliary estimates}
                            \label{sec3}

We will use the following variant of the parabolic Poincar\'e inequality.

\begin{lemma}
                                        \label{paraPoin}
Let $p \in (1,\infty)$, $\Omega$ a Lipschitz domain, and $r\in (0,\text{diam}(\Omega))$. Assume that $u \in \cH^1_{p}(\cD_r(z_0))$ for some $z_0\in (-\infty,0)\times \Omega$.
Suppose that $B=\hat B=C=0$, $g,f\in L_{p,\text{loc}}$, and $\cP u=\Div g+f$ in $\cD_r(z_0)$. Then
\begin{equation*}
\int_{\cD_r(z_0)}|u(t,x)-(u)_{\cD_r(z_0)}|^p\,dz\le Nr^{p}\int_{\cD_r(z_0)}(|Du|^p+|g|^p+r^p |f|^p)\,dz,
\end{equation*}
where $N=N(d,\nu,p,\Omega)>0$.
\end{lemma}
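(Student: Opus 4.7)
My plan is to follow the standard proof of the parabolic Poincar\'e inequality, which splits the oscillation of $u$ on $\cD_r(z_0)$ into an $x$-oscillation controlled by the spatial Poincar\'e inequality and a $t$-oscillation controlled via the PDE. Fix a spatial cutoff $\eta\in C_c^\infty(\Omega_r(x_0))$ with $0\le\eta$, $\int\eta\,dx=1$, $\|\eta\|_\infty\le Nr^{-d}$, and $\|D\eta\|_\infty\le Nr^{-d-1}$; such an $\eta$ exists because the Lipschitz character of $\Omega$ guarantees an interior ball of radius $c(\Omega)r$ inside $\Omega_r(x_0)$ for $r<\text{diam}(\Omega)$. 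Set
\[
\bar u_\eta(t):=\int_{\Omega_r(x_0)}\eta(x)u(t,x)\,dx.
\]
The goal is to bound $\|u-(u)_{\cD_r(z_0)}\|_{L_p(\cD_r(z_0))}^p$ by comparing $u$ first to $\bar u_\eta(t)$ (spatial step) and then $\bar u_\eta(t)$ to its temporal average (temporal step).

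For the spatial step, I would apply the $L_p$-Poincar\'e inequality on the Lipschitz domain $\Omega_r(x_0)$ at a.e.\ fixed $t$ to $u-\bar u_\eta(t)$, noting that $|\bar u(t)-\bar u_\eta(t)|$ (where $\bar u(t)$ is the unweighted average) is itself controlled by $\|u-\bar u\|_{L_p(\Omega_r)}$ via H\"older's inequality. Integration in $t$ yields
\[
\int_{\cD_r(z_0)}|u-\bar u_\eta(t)|^p\,dz\le Nr^p\int_{\cD_r(z_0)}|Du|^p\,dz.
\]
For the temporal step, since $B=\hat B=C=0$ and $u_t\in\bH^{-1}_p$, the PDE gives, for a.e.\ $t$ and any spatial test function $\phi$,
\[
\langle u_t(t,\cdot),\phi\rangle=-\int A^{\alpha\beta}D_\beta u\,D_\alpha\phi\,dx+\int g_\alpha D_\alpha\phi\,dx+\int f\phi\,dx.
\]
Taking $\phi=\eta$, which is admissible because $\eta\in C_c^\infty(\Omega_r(x_0))$, and using $\partial_t\bar u_\eta(t)=\langle u_t(t,\cdot),\eta\rangle$, I get
\[
|\partial_t\bar u_\eta(t)|\le Nr^{-1}\|D\eta\|_\infty^{\,0}\cdots\le Nr^{-1-d/p}\bigl(\|Du(t,\cdot)\|_{L_p(\Omega_r)}+\|g(t,\cdot)\|_{L_p(\Omega_r)}\bigr)+Nr^{-d/p}\|f(t,\cdot)\|_{L_p(\Omega_r)}
\]
by H\"older's inequality and the bounds on $\eta,D\eta$ and on $A$. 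Taking $p$-th powers, integrating in $t$ over $(t_0-r^2,t_0)$, and applying the one-dimensional Poincar\'e inequality to $\bar u_\eta$ against its temporal mean $c$, then multiplying by $|\Omega_r(x_0)|\le Nr^d$, produces
\[
\int_{\cD_r(z_0)}|\bar u_\eta(t)-c|^p\,dz\le Nr^p\int_{\cD_r(z_0)}\bigl(|Du|^p+|g|^p+r^p|f|^p\bigr)\,dz.
\]

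Combining the two steps by the triangle inequality gives the asserted bound with $c$ in place of $(u)_{\cD_r(z_0)}$, and replacing $c$ by the true mean costs only a constant factor, since $|(u)_{\cD_r(z_0)}-c|^p|\cD_r(z_0)|\le\int|u-c|^p\,dz$. The only real obstacle is checking that $\cD_r(z_0)$ is a sufficiently nice domain to support both the spatial Poincar\'e inequality (which needs a uniform Lipschitz constant) and the interior-ball construction for $\eta$ (so that the constants in $\|\eta\|_\infty,\|D\eta\|_\infty,\int\eta$ depend only on $\Omega$ through its Lipschitz character, not on the particular location of $x_0$ in $\Omega$); this is standard for Lipschitz domains uniformly in $r<\text{diam}(\Omega)$, but is the place where the hypothesis that $\Omega$ is Lipschitz really gets used and must be invoked carefully.
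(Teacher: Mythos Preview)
Your proposal is correct and is essentially the argument the paper has in mind: the paper's proof simply cites Krylov's parabolic Poincar\'e inequality on $\bR^d$ and remarks that the only modification needed for a Lipschitz $\Omega$ is the existence of a cutoff $\zeta\in C_0^\infty(\Omega_r(x_0))$ with $r|D\zeta|$ uniformly bounded by a constant depending only on the Lipschitz norm of $\partial\Omega$, which is exactly your $\eta$ (up to normalization). Your spatial/temporal splitting, the use of the PDE against $\eta$ to control $\partial_t\bar u_\eta$, and the final combination via the triangle inequality are the standard steps behind Krylov's lemma, so you have filled in precisely the details the paper leaves implicit; the one place to tidy is the garbled display $Nr^{-1}\|D\eta\|_\infty^{\,0}\cdots$, where the intended bound is simply $|\partial_t\bar u_\eta(t)|\le Nr^{-1-d/p}(\|Du(t,\cdot)\|_{L_p(\Omega_r)}+\|g(t,\cdot)\|_{L_p(\Omega_r)})+Nr^{-d/p}\|f(t,\cdot)\|_{L_p(\Omega_r)}$.
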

\begin{proof}
In the special case when $\Omega=\bR^d$, this is Lemma 3.1 of \cite{Krylov_2005}. The general case follows from the same argument with obvious modifications, by noting that in each $\Omega_r(x_0)$ one can find a cutoff function $\zeta\in C_0^\infty(\Omega_r(x_0))$ such that $r|D\zeta|$ is uniformly bounded by a constant depending only on $d$ and the Lipschitz norm of $\partial\Omega$.
\end{proof}

Next we shall present some local $L_p$-estimates, which are deduced from the results obtained in \cite{DK11}.
\begin{lemma}
                                            \label{lem3.1}
Let $p\in (1,\infty)$. Assume that $A^{\alpha\beta}$ are partially VMO in $z'$ and $u\in \cH^1_p(\cD_1)$ satisfies
\begin{equation}
                                        \label{eq3.12}
\cP u=\Div g+f,
\end{equation}
in $\cD_1$ and $u=0$ on $\Gamma_1$, where $f,g\in L_p(\cD_1)$. Then there exist constants $r_0=r_0(d,n,\nu,\Omega,p)\in (0,1)$ and $N=N(d,n,\nu,\Omega,K,\omega_{A,z'},p)$ such that
\begin{equation*}
\|u\|_{\cH_p^1(\cD_{r_0})}\le N(\|u\|_{L_p(\cD_1)}+\|g\|_{L_p(\cD_1)}+\|f\|_{L_p(\cD_1)}).
\end{equation*}
\end{lemma}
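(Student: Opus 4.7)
The plan is to reduce the desired local estimate to a global $\cH^1_p$-solvability result in [DK11] via a cutoff argument. First, I would fix a smooth cutoff $\zeta(t,x)$ with $\zeta\equiv 1$ on $\cD_{r_0}$, $\zeta\equiv 0$ outside $\cD_{2r_0}$, and $|D\zeta|+r_0|\zeta_t|\le N/r_0$, where $r_0\in(0,1/2)$ will be chosen small at the end. Setting $v=u\zeta$ (extended by zero), $v$ lies in $\cH^1_p((-\infty,0)\times\Omega)$ and vanishes on the entire parabolic boundary since $u=0$ on $\Gamma_1$ and $\zeta$ has compact support in $\cD_1$.

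A direct product-rule computation yields $\cP v=\Div\tilde g+\tilde f$ with
\[
\tilde g=\zeta g+A^{\alpha\beta}u\,D_\beta\zeta,\qquad \tilde f=-\zeta_t u-g\cdot D\zeta+\zeta f+A^{\alpha\beta}D_\beta u\,D_\alpha\zeta+(B^\alpha+\hat B^\alpha)u\,D_\alpha\zeta.
\]
Applying the global $\cH^1_p$-estimate from [DK11] to $v$---which exploits the partially VMO hypothesis on $A^{\alpha\beta}$ and the $C^{1,\delta}$-regularity of $\partial\Omega$---gives
\[
\|v\|_{\cH^1_p((-\infty,0)\times\Omega)} \le N\bigl(\|\tilde g\|_{L_p}+\|\tilde f\|_{L_p}+\|v\|_{L_p}\bigr).
\]
Since $v=u$ on $\cD_{r_0}$, the left-hand side dominates $\|u\|_{\cH^1_p(\cD_{r_0})}$, and the terms involving $g$, $f$, and $u$ with factors of $\zeta$, $\zeta_t$, $D\zeta$ contribute only the admissible quantities $\|g\|_{L_p(\cD_1)}$, $\|f\|_{L_p(\cD_1)}$, and $r_0^{-2}\|u\|_{L_p(\cD_1)}$.

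The main obstacle is the term $A^{\alpha\beta}D_\beta u\,D_\alpha\zeta$ in $\tilde f$, whose $L_p$-norm is bounded by $Nr_0^{-1}\|Du\|_{L_p(\cD_{2r_0})}$---precisely the quantity we are trying to bound. To absorb it, I would use one of two standard devices: either (i) introduce a large zero-order shift $\lambda u$ on both sides of the equation and invoke a resolvent-type version of the DK11 estimate, in which this bad term appears weighted by $\lambda^{-1/2}$ and can be absorbed into $\sqrt{\lambda}\|Dv\|_{L_p}$ on the left once $\lambda$ is taken sufficiently large (depending on $r_0$); or (ii) iterate with a family of nested cutoffs and apply a Giaquinta-style iteration lemma, using that $D\zeta$ is supported only on the thin annulus $\cD_{2r_0}\setminus\cD_{r_0}$ to generate a contractive factor. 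Either route, after choosing $r_0$ small enough for the DK11 estimate to apply, yields the stated local bound.
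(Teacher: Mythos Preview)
Your approach is exactly the ``standard localization argument'' the paper invokes: cut off, push the commutator terms into the data, apply the global $\cH^1_p$-estimate of \cite{DK11}, and then absorb the residual $Du$ term. One point worth noting explicitly (the paper's proof mentions it) is that the applicability of Theorem~2.4 in \cite{DK11} near the boundary relies on the inner normal of $\partial\Omega$ at $0$ being $e_d$, so that the partially-VMO-in-$z'$ structure survives the boundary flattening; this is implicit in your appeal to the $C^{1,\delta}$ regularity but deserves a sentence.

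There is a small imprecision in your option~(i). The resolvent estimate bounds $\|Dv\|_{L_p}$, not $\|Du\|_{L_p(\cD_{2r_0})}$; since $v=u\zeta$ and $\zeta$ is not identically $1$ on the support of $D\zeta$, the bad term $\lambda^{-1/2}r_0^{-1}\|Du\|_{L_p(\cD_{2r_0})}$ cannot be absorbed into the left-hand side by choosing $\lambda$ large alone---$Du$ lives on a genuinely larger set than $Dv$ does. What the $\lambda$-shift actually buys you is a small coefficient in front of $\|Du\|_{L_p(\cD_R)}$ in an inequality of the form $\|Du\|_{L_p(\cD_r)}\le \tfrac12\|Du\|_{L_p(\cD_R)}+\cdots$, which then still requires the nested-radii iteration of your option~(ii) to close. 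So (i) and (ii) are not really alternatives; (ii) (possibly aided by a $\lambda$-shift to manufacture the contractive factor) is the route that works. With that correction, the argument is complete and matches the paper.
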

\begin{proof}
Since $\partial \Omega \in C^{1,\delta}$ and the inner normal of $\Omega$ at $0$ is parallelled to $e_d$,  the lemma
is deduced from Theorem 2.4 of \cite{DK11} by using a standard localization argument.
\end{proof}

By using the Sobolev embedding theorem and a bootstrap argument, we get
\begin{corollary}
                                    \label{cor3.1}
Let $1\le p<q<\infty$. Assume that $A^{\alpha\beta}$  are partially VMO in $z'$, $u$ is a local weak solution to \eqref{eq3.12} in $\cD_1$ and $u=0$ on $\Gamma_1$, where $f,g\in L_q(\cD_1)$. Then there exist constants $r_0=r_0(d,n,\nu,\Omega,q)\in (0,1)$ and $N=N(d,n,\nu,\Omega,K,\omega_{A,z'},p,q)$ such that
\begin{equation*}
\|u\|_{\cH^1_q(\cD_{r_0})}\le N(\|u\|_{L_p(\cD_1)}+\|g\|_{L_q(\cD_1)}+\|f\|_{L_q(\cD_1)}).
\end{equation*}
In particular, if $q>d+2$, it holds that
$$
|u|_{\gamma/2,\gamma;\cD_{r_0}}\le N(\|u\|_{L_p(\cD_1)}+\|g\|_{L_q(\cD_1)}+\|f\|_{L_q(\cD_1)}),
$$
where $\gamma=1-(d+2)/q$.
\end{corollary}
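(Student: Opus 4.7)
The plan is to bootstrap the integrability of $u$ by alternating applications of Lemma \ref{lem3.1} and a parabolic Sobolev embedding, and to derive the Hölder bound via a parabolic Morrey-type embedding at the final step.

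I would fix a finite increasing sequence of exponents $p=p_0<p_1<\cdots<p_M=q$ in which each $p_{k+1}$ is chosen strictly below the parabolic Sobolev exponent of $p_k$, so that $1/p_k$ decreases by a fixed amount at each step and $M$ depends only on $d$, $p$, and $q$. Pair these with a strictly decreasing sequence of radii $1=\rho_0>\rho_1>\cdots>\rho_M=r_0$. I would prove by induction on $k$ the estimate
\[
\|u\|_{L_{p_k}(\cD_{\rho_k})}\le N\bigl(\|u\|_{L_p(\cD_1)}+\|g\|_{L_q(\cD_1)}+\|f\|_{L_q(\cD_1)}\bigr).
\]
At step $k$, applying (a rescaled form of) Lemma \ref{lem3.1} with exponent $p_k$ on $\cD_{\rho_k}$ yields an $\cH^1_{p_k}$ bound on an intermediate cylinder. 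Since $\cD_1$ has finite measure and $p_k\le q$, the $L_{p_k}$ norms of $f$ and $g$ are controlled by their $L_q$ norms. The parabolic Sobolev embedding $\cH^1_{p_k}\hookrightarrow L_{p_{k+1}}$ then advances the induction to the next step. After $M$ iterations, one more application of Lemma \ref{lem3.1} at exponent $q$ delivers the desired $\cH^1_q(\cD_{r_0})$ estimate.

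A minor subtlety concerns the base case $k=0$: a local weak solution is a priori only in $\cH^1_2$, so when $p<2$ one must first control $\|u\|_{L_2}$ by $\|u\|_{L_p}$ on a slightly smaller cylinder. I would handle this via a Caccioppoli-type energy inequality, obtained by testing \eqref{eq3.12} against $u\zeta^2$ for a cutoff $\zeta$, combined with an interpolation of $L_2$ between $L_p$ and the parabolic Sobolev target, using Young's inequality to absorb the higher-integrability term. When $p\ge 2$ this preliminary step is unnecessary. Finally, the Hölder bound when $q>d+2$ is immediate from the standard parabolic Morrey--Sobolev embedding $\cH^1_q\hookrightarrow C^{\gamma/2,\gamma}$ with $\gamma=1-(d+2)/q$, applied on $\cD_{r_0}$. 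I expect the main bookkeeping nuisance to be tracking the shrinking radii and the dependence of the constants across the finitely many iterations, but there is no new analytical ingredient beyond Lemma \ref{lem3.1} itself.
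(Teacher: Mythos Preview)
Your proposal is correct and follows exactly the approach the paper indicates: the paper's entire justification for Corollary~\ref{cor3.1} is the single sentence ``By using the Sobolev embedding theorem and a bootstrap argument, we get,'' and your outline is a faithful fleshing-out of that strategy. Your treatment of the base case $p<2$ via Caccioppoli plus interpolation and absorption over shrinking radii is a standard and valid way to handle it (though for $1<p<2$ one could alternatively start the bootstrap directly at exponent $p$, since $\cH^1_2\subset\cH^1_p$ on bounded sets and Lemma~\ref{lem3.1} applies).
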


We state a mean oscillation estimate in the special case when the boundary is locally flat and $A^{\alpha\beta}$ depend only on $x^d$.

\begin{lemma}
                                \label{lem3.2}
Let $\gamma\in (0,1)$ and $0< r<R<0$. Assume that $A^{\alpha\beta}=A^{\alpha\beta}(x^d)$ and $w\in \cH^1_2(Q_R^+)$ satisfies
$$
-w_t+D_\alpha(A^{\alpha\beta}D_\beta w)=0
$$
in $Q_R^+$ and $w=0$ on $Q_R\cap \{x^d=0\}$. Denote $W:=A^{d\beta}(x^d)D_\beta w$. Then we have
\begin{align*}
&\int_{Q_r^+}|D_{x'}w|^2+|W-(W)_{Q_r^+}|^2
\,dz\nonumber\\
&\,\le N(r/R)^{d+2+2\gamma}
\int_{Q_R^+}|D_{x'}w|^2+|W-(W)_{Q_R^+}|^2\,dz,
\end{align*}
where $N=N(d,n,\nu,\gamma)$.
\end{lemma}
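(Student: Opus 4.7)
The plan is to reduce the claimed Campanato-type decay to a pointwise H\"older bound for $D_{x'}w$ and $W$. By the parabolic rescaling $w(t,x)\mapsto w(R^2t,Rx)$, which preserves both the class of equations (with the rescaled coefficient $A(R\,\cdot)$) and the form of the desired inequality (with $r$ replaced by $r/R$), I may assume $R=1$ and treat $r\in(0,1/2]$, the regime $r\in(1/2,1)$ being trivial. The pivotal device is the normalization
\[
\tilde w(t,x):=w(t,x)-\phi(x^d)(W)_{Q_1^+},\qquad \phi(x^d):=\int_0^{x^d}(A^{dd}(s))^{-1}\,ds,
\]
which, because $A=A(x^d)$, satisfies $-\tilde w_t+D_\alpha(A^{\alpha\beta}D_\beta\tilde w)=0$ in $Q_1^+$, vanishes on $\{x^d=0\}$, and obeys $D_{x'}\tilde w=D_{x'}w$ and $\tilde W:=A^{d\beta}D_\beta\tilde w=W-(W)_{Q_1^+}$; thus both sides of the claimed inequality are invariant under the substitution $w\mapsto\tilde w$.

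Because $A$ is independent of $(t,x')$, tangential and time difference quotients show that $D_{x'}^{\alpha'}D_t^j\tilde w$ solve the same homogeneous equation with the same zero Dirichlet condition on $\{x^d=0\}$, for every multi-index $\alpha'$ and $j\ge 0$. Iterating the flat-boundary $\cH^1_p$ estimate of Lemma~\ref{lem3.1} together with parabolic Sobolev bootstrap yields
\[
\|D_{x'}^{\alpha'}D_t^j\tilde w\|_{\cH^1_p(Q_{3/4}^+)}\le N_{\alpha',j,p}\,\|\tilde w\|_{L^2(Q_1^+)}
\]
for every $p\in(1,\infty)$. Choosing $p>d+2$ so that $1-(d+2)/p\ge\gamma$, parabolic Morrey embedding gives $[D_{x'}\tilde w]_{C^{\gamma/2,\gamma}(Q_{3/4}^+)}\le N\|\tilde w\|_{L^2(Q_1^+)}$. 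For $\tilde W$ I would use
\[
D_{x'}\tilde W=A^{d\beta}D_\beta D_{x'}\tilde w,\qquad D_t\tilde W=A^{d\beta}D_\beta\tilde w_t,
\]
together with the rewritten equation $D_d\tilde W=\tilde w_t-A^{\alpha'\beta'}D_{\alpha'}D_{\beta'}\tilde w-A^{\alpha'd}D_d D_{\alpha'}\tilde w$; each right-hand side term is controlled in $L^p(Q_{3/4}^+)$ by $\|\tilde w\|_{L^2(Q_1^+)}$ via the previous display (in particular, $D_d D_{\alpha'}\tilde w$ is the $d$-th component of the full gradient of $D_{\alpha'}\tilde w\in\cH^1_p$). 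Parabolic Morrey then gives the analogous bound $[\tilde W]_{C^{\gamma/2,\gamma}(Q_{3/4}^+)}\le N\|\tilde w\|_{L^2(Q_1^+)}$.

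To trade $\|\tilde w\|_{L^2}$ for the right-hand side of the lemma, the Dirichlet condition $\tilde w|_{\{x^d=0\}}=0$ combined with the one-dimensional Poincar\'e inequality gives $\|\tilde w\|_{L^2(Q_1^+)}\le N\|D_d\tilde w\|_{L^2(Q_1^+)}$, and the identity $D_d\tilde w=(A^{dd})^{-1}(\tilde W-A^{d\alpha'}D_{\alpha'}\tilde w)$ together with the ellipticity bound on $(A^{dd})^{-1}$ upgrades this to
\[
\|\tilde w\|_{L^2(Q_1^+)}\le N\bigl(\|D_{x'}w\|_{L^2(Q_1^+)}+\|W-(W)_{Q_1^+}\|_{L^2(Q_1^+)}\bigr).
\]
For $r\le 1/2$ and any $z\in Q_r^+$, since $D_{x'}w$ vanishes on $\{x^d=0\}$,
\[
|D_{x'}w(z)|\le[D_{x'}w]_{C^{\gamma/2,\gamma}(Q_{3/4}^+)}(x^d)^\gamma\le[D_{x'}w]_{C^{\gamma/2,\gamma}(Q_{3/4}^+)}r^\gamma,
\]
and $|W(z)-(W)_{Q_r^+}|\le 2[W]_{C^{\gamma/2,\gamma}(Q_{3/4}^+)}r^\gamma$; squaring, integrating over $Q_r^+$, and inserting the H\"older bounds produces the factor $r^{d+2+2\gamma}$, which is precisely the lemma. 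The hard part I expect is engineering the normalization $w\mapsto\tilde w$ so that the homogeneous equation, the zero lateral boundary condition, and the pair $(D_{x'}w,W)$ modulo an additive constant are simultaneously preserved; once that reduction is available, the remainder is a routine combination of tangential iteration and parabolic Sobolev embedding.
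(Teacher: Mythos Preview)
Your argument is correct, but it does not follow the paper's route. The paper's proof is a one-line reduction: extend $w$ from $Q_R^+$ to $Q_R$ by odd reflection in $x^d$ (with the corresponding even/odd extensions of the coefficients $A^{\alpha\beta}(x^d)$), check that the extended function solves a homogeneous system of the same type in the full cylinder, and then invoke the interior mean-oscillation estimate already established in \cite{Dong12}. The reflection preserves $D_{x'}w$ and $W$ (up to sign), so the half-cylinder decay follows immediately from the interior one.

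Your approach instead re-derives the boundary estimate from scratch: the normalization $\tilde w=w-\phi(x^d)(W)_{Q_1^+}$ kills the average of $W$, tangential and time differentiation combined with the $\cH^1_p$ bootstrap of Lemma~\ref{lem3.1} gives $C^{\gamma/2,\gamma}$ control of $D_{x'}\tilde w$ and $\tilde W$, and the Poincar\'e step in $x^d$ converts $\|\tilde w\|_{L^2}$ back to the right-hand side of the lemma. This is essentially the mechanism behind the interior result in \cite{Dong12}, carried out directly in the half-space setting. The payoff of the paper's reflection trick is brevity and avoidance of the $L^p$ machinery; the payoff of your argument is that it is self-contained once Lemma~\ref{lem3.1} is granted and makes the role of the normalization $\phi$ explicit. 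Both arrive at the same conclusion with constants depending only on $d,n,\nu,\gamma$.
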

\begin{proof}
The lemma is a simple consequence of (20) in \cite{Dong12} by using odd and even extensions to $Q_R$.
\end{proof}

For the proof of Theorem \ref{thm2}, we consider parabolic systems with coefficients measurable in $(x^{d-1},x^d)$.

\begin{lemma}
                                \label{lem3.5}
Assume that $A^{\alpha\beta}=A^{\alpha\beta}(x^{d-1},x^d)$ and  $u\in \cH^1_2(Q_1)$ satisfies
\begin{equation}
                                   \label{eq:geq}
-u_t+D_\alpha(A^{\alpha\beta}D_\beta u)=0
\end{equation}
in $Q_1$. Then for some $\delta_0=\delta_0(d,n,\nu)\in (0,1)$, we have $u\in C^{\delta_0/2,\delta_0}(Q_{1/2})$ and
\begin{equation}
|u|_{\delta_0/2,\delta_0;Q_{1/2}}\le N\|u\|_{L_2(Q_1)},
                                    \label{eq10.13}
\end{equation}
where $N=N(d,n,\nu)$.
\end{lemma}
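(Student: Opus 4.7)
The overall plan is to combine Campanato's approach with Meyers' reverse H\"older inequality and an anisotropic parabolic Sobolev embedding in the directions $z''=(t,x^1,\ldots,x^{d-2})$. The starting observation is that because $A^{\alpha\beta}=A^{\alpha\beta}(x^{d-1},x^d)$ does not depend on $z''$, equation \eqref{eq:geq} is translation invariant in $z''$, so every finite difference quotient of $u$ in a $z''$ direction solves the same system. First I would apply the standard Caccioppoli inequality
$$\int_{Q_r}|Du|^2\,dz\le \frac{N}{(R-r)^2}\int_{Q_R}|u|^2\,dz,\qquad 0<r<R\le 1,$$
iteratively to the $z''$ difference quotients and pass to the limit in the translation parameter to obtain
$$\|D D^k_{z''}u\|_{L_2(Q_{r_k})}\le N(k)\|u\|_{L_2(Q_1)}$$
on a nested sequence of cylinders $Q_{r_k}$ with $r_k\downarrow 1/2$, for every $k\ge 0$.

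Next I would invoke Meyers' reverse H\"older inequality (deducible from Caccioppoli and the parabolic Poincar\'e inequality of Lemma \ref{paraPoin} via Gehring's lemma, with the gain of integrability depending only on $d,n,\nu$): there exists $q=q(d,n,\nu)>2$ such that every weak solution $v$ of $-v_t+D_\alpha(A^{\alpha\beta}D_\beta v)=0$ satisfies $\|Dv\|_{L_q(Q_{r/2})}\le N\|Dv\|_{L_2(Q_r)}$. Applying this to $v=D^k_{z''}u$ and composing with the previous step promotes every mixed derivative of $u$ with arbitrarily many indices in $z''$ and at most one index in $(x^{d-1},x^d)$ to $L_q$-integrability on a slightly smaller cylinder, with norm controlled by $\|u\|_{L_2(Q_1)}$.

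The variables $z''$ form a parabolic space of homogeneous dimension $d$ (one weight-$2$ time variable plus $d-2$ spatial variables). An anisotropic parabolic Sobolev embedding in $z''$, fed with the $L_q$-bounds on $D^k_{z''}u$ for $k$ sufficiently large, would produce H\"older continuity of $u$ in the $z''$ variables with a positive exponent depending only on $d,n,\nu$, uniformly in $(x^{d-1},x^d)$. In the remaining two directions, the bound $D_{(x^{d-1},x^d)}u\in L_q$ with $q>2$, together with its $z''$-smoothed analogues, yields H\"older regularity in $(x^{d-1},x^d)$ through a two-dimensional Morrey/Campanato-type estimate. Welding these two regularities by an anisotropic mean-oscillation decay of the form
$$\dashint_{Q_r(z_0)}|u-(u)_{Q_r(z_0)}|^2\,dz\le Nr^{2\delta_0}\|u\|_{L_2(Q_1)}^2,\qquad z_0\in Q_{1/2},\ r\le 1/4,$$
for some $\delta_0=\delta_0(d,n,\nu)\in(0,1)$, and invoking Campanato's characterization of H\"older spaces, would yield \eqref{eq10.13}.

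The main obstacle will be welding the arbitrarily high $z''$-smoothness to the barely-super-$L_2$ integrability in $(x^{d-1},x^d)$ into a single joint H\"older exponent. Because the Meyers exponent $q$ exceeds $2$ by only a small amount depending on $(d,n,\nu)$, no direct Morrey embedding in all variables is available; instead the anisotropic interpolation must be tracked carefully through Campanato's mean-oscillation approach to extract a uniform joint H\"older modulus, and this is precisely where the hypothesis that the coefficients depend on only two spatial variables is essential.
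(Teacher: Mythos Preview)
Your approach is essentially the same as the paper's: exploit the $z''$-independence of the coefficients to differentiate (or difference-quotient) the equation, iterate the energy/Caccioppoli estimate to control $\|D\,D_{z''}^k u\|_{L_2}$ for all $k$, bootstrap via the reverse H\"older inequality to $L_p$ for some $p=p(d,n,\nu)>2$, and finish with an anisotropic Sobolev embedding.

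The only substantive difference is in the final step. The paper does not split the argument into ``H\"older in $z''$'' plus ``two-dimensional Morrey in $(x^{d-1},x^d)$'' and then weld; it invokes directly an anisotropic Sobolev inequality (citing \cite[Lemma~2.2]{LiNi} or \cite[Lemma~2.1]{Dong13}) which says, roughly, that once $D_t^kD_{x''}^l u$ and $D_t^kD_{x''}^l Du$ are in $L_p$ for $k+l\ge (d+1)/2$, the function $u$ is H\"older with exponent $\delta_0=1-2/p$. This single inequality absorbs exactly the ``welding'' you flag as the main obstacle, and yields the joint modulus in one stroke. Your Campanato-style assembly would also work, but is more laborious; the cleaner route is to recognize that the collection of $L_p$ bounds you have already produced is precisely the input to the cited anisotropic embedding, so no separate two-dimensional Morrey step is needed.
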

\begin{proof}
We may assume that $A^{\alpha\beta}$ are smooth and thus $u$ is smooth. The estimates derived below are independent of the smoothness of $A^{\alpha\beta}$ and u. So we can use the standard mollification argument and then pass to the limit.

By the energy estimate of parabolic systems with measurable coefficients which are independent of $t$ (see, for instance,
\cite[Lemma 3.3]{DK11b}), we have for any $1/2\le r<R\le 1$,
\begin{equation}
                               \label{eq:energyest}
\| u_t\|_{L_2(Q_{r})} +\|D u\|_{L_2(Q_{r})} \leq N \|u\|_{L^2(Q_R)},
\end{equation}
where $N=N(d,n,\nu,r,R)$.
Differentiating the equation with respect to $x''$ and $t$, it is easy to see that, for any nonnegative integers $k$ and $l$, $D_t^k D_{x''}^l u$ are still solutions of \eqref{eq:geq}. Applying the energy estimate \eqref{eq:energyest} to $D_t^k D_{x''}^l u$ gives
\[
\| D_t^{k+1} D_{x''}^l u\|_{L_2(Q_{r})} +\|D D_t^k D_{x''}^l u\|_{L_2(Q_{r})} \leq N \|D_t^k D_{x''}^l u\|_{L^2(Q_R)}.
\]
By iterating the above inequality for $k$ and $l$, we obtain
$$
\|D_t^k D_{x''}^l u\|_{L_2(Q_{r})}+\|D_t^k D_{x''}^l D u\|_{L_2(Q_{r})}\le N\|u\|_{L_2(Q_R)},
$$
where $N=N(d,n,\nu,k,l,r,R)$. It follows from the reverse H\"older inequality (see \cite[Theorem 2.1]{GS}), that
$$
\|D_t^k D_{x''}^l u\|_{L_p(Q_{r})}+\|D_t^k D_{x''}^l D u\|_{L_p(Q_{r})}\le N(d,n,\nu,k,l) \|u\|_{L_2(Q_R)},
$$
where $p=p(d,n,\nu)>2$. This then implies \eqref{eq10.13} with $\delta_0=1-2/p$ by using an anisotropic Sobolev inequality (see, for instance, \cite[Lemma 2.2]{LiNi} or \cite[Lemma 2.1]{Dong13}) with $k+l\ge (d+1)/2$.
\end{proof}

\mysection{Proof of Theorem \ref{thm1}}
                            \label{sec4}

The following lemma reduces the estimate of $[u]_{1/2,1}$ to the estimate of $|Du|_0$.

\begin{lemma}
                                \label{lem4.9}
Let $\varepsilon\in (0,1)$ and $u$ be a weak solution to \eqref{parabolic} in $\cD_1$ with the Dirichlet boundary
condition on $\Gamma_1$. Suppose that $|u|_{0;\cV_{\varepsilon}}<\infty$ and $|Du|_{0;\cV_{\varepsilon}}<\infty$. Then we have
\begin{equation}
                                            \label{eq1.48}
[u]_{1/2,1;\cV_{\varepsilon/2}}\le N(|u|_{0;\cV_{\varepsilon}}+|Du|_{0;\cV_{\varepsilon}}
+|f|_{0;\cV_{\varepsilon}}+|g|_{0;\cV_{\varepsilon}}),
\end{equation}
where $N=N(d,n,\nu, K,  \Omega)$.
\end{lemma}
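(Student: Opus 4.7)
The plan is to bound $[u]_{1/2,1;\cV_{\varepsilon/2}}$ by separately estimating the spatial Lipschitz semi-norm and the $1/2$-H\"older semi-norm in time, and then combining via the triangle inequality. The spatial piece is free: since $\cC_{\varepsilon/2}=B_{\varepsilon/2}\cap\{x^d>|x'|\}$ is convex, any two points $x_1,x_2\in \cC_{\varepsilon/2}$ are joined by a segment lying inside $\cC_{\varepsilon/2}$, so the hypothesis $|Du|_{0;\cV_\varepsilon}<\infty$ yields $|u(t,x_1)-u(t,x_2)|\le |Du|_{0;\cV_\varepsilon}|x_1-x_2|$.

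For the time piece, fix $(t_1,x_0),(t_2,x_0)\in \cV_{\varepsilon/2}$ with $t_1<t_2$, set $r:=\sqrt{t_2-t_1}$, and split on the size of $r$ versus $x_0^d$. Since $\partial\Omega\in C^{1,\delta}$ with $e_d$ the inner normal at $0$, we may take $\varepsilon$ small enough that $\operatorname{dist}(x,\partial\Omega)\ge c_0 x^d$ for every $x\in \cC_\varepsilon$, where $c_0=c_0(\Omega)>0$. In the interior case $r\le c_0 x_0^d/2$, the ball $B_r(x_0)$ lies in $\Omega$. I pick $\phi\in C_0^\infty(B_r(x_0))$ with $\int\phi=1$, $|\phi|\le Nr^{-d}$, $|D\phi|\le Nr^{-d-1}$, and test the weak formulation of $\cP u=\Div g+f$ against $\phi(x)\mathbf{1}_{(t_1,t_2)}(t)$ to get
\begin{align*}
\int u(t_2,\cdot)\phi-\int u(t_1,\cdot)\phi
&=\int_{t_1}^{t_2}\!\int (g_\alpha-A^{\alpha\beta}D_\beta u-B^\alpha u)\,D_\alpha\phi\,dx\,dt\\
&\quad+\int_{t_1}^{t_2}\!\int (\hat B^\alpha D_\alpha u+Cu-f)\phi\,dx\,dt.
\end{align*}
Using $|\operatorname{supp}\phi|\le Nr^d$, $t_2-t_1=r^2$, and the sup-norm bounds on the coefficients, on $u,Du$, and on $f,g$, the right-hand side is bounded by $NrM$, where $M:=|u|_{0;\cV_\varepsilon}+|Du|_{0;\cV_\varepsilon}+|f|_{0;\cV_\varepsilon}+|g|_{0;\cV_\varepsilon}$. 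Together with $|u(t_i,x_0)-\int u(t_i,\cdot)\phi|\le |Du|_{0;\cV_\varepsilon}\,r$ from the spatial bound, this gives $|u(t_1,x_0)-u(t_2,x_0)|\le NrM$.

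In the boundary case $r>c_0 x_0^d/2$, the Lipschitz extension of $u(t,\cdot)$ to $\overline{\cC_\varepsilon}$, together with the zero Dirichlet trace on $\Gamma_1$, forces the extended value at the vertex $0\in\partial\Omega$ to vanish, so $|u(t_i,x_0)|=|u(t_i,x_0)-u(t_i,0)|\le |Du|_{0;\cV_\varepsilon}|x_0|\le \sqrt 2\,|Du|_{0;\cV_\varepsilon} x_0^d\le N|Du|_{0;\cV_\varepsilon}\,r$, using the cone identity $|x_0|\le\sqrt 2\,x_0^d$. Combining the two cases with the spatial estimate via the triangle inequality gives \eqref{eq1.48}. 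The main obstacle is the boundary case: rigorously identifying the continuous extension of $u$ at the vertex with the zero trace requires either invoking the H\"older continuity up to the boundary from \eqref{eq:flatholder} or running a limiting argument along a sequence of points in $\partial\Omega$ approaching $0$; the geometric fact $|x|\le\sqrt 2\,x^d$ inside the cone is what converts a distance-to-vertex bound into the desired bound in $r$.
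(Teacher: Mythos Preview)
Your time-regularity argument has a gap in the interior case. The condition $r\le c_0 x_0^d/2$ guarantees $B_r(x_0)\subset\Omega$, but it does \emph{not} guarantee $B_r(x_0)\subset\cC_\varepsilon$: the distance from $x_0\in\cC_{\varepsilon/2}$ to the lateral cone face $\{y^d=|y'|\}$ is $(x_0^d-|x_0'|)/\sqrt 2$, which can be arbitrarily small even when $x_0^d$ is of order~$\varepsilon$. When $B_r(x_0)$ protrudes past that face, the hypotheses of the lemma give you no control of $u$, $Du$, $f$, $g$ on $(t_1,t_2)\times B_r(x_0)$---the sup-norms appearing in \eqref{eq1.48} are taken only over $\cV_\varepsilon$---so neither the bound on the right-hand side of your tested identity nor the estimate $|u(t_i,x_0)-\int u(t_i,\cdot)\phi|\le |Du|_{0;\cV_\varepsilon}\,r$ is justified. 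Your dichotomy ``close to $\partial\Omega$ / far from $\partial\Omega$'' thus misses a third regime, where $x_0$ is far from $\partial\Omega$ but close to the lateral cone face; there neither the test-function estimate nor the vertex argument applies.

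The paper sidesteps this by never leaving $\cV_\varepsilon$. It rewrites the system as $-u_t+D_\alpha(A^{\alpha\beta}D_\beta u)=\Div\fg+\ff$ and applies the parabolic Poincar\'e inequality of Lemma~\ref{paraPoin} with the Lipschitz domain $\cC_\varepsilon$ in the role of $\Omega$: for any $z_0\in\overline{\cV_{\varepsilon/2}}$ and $r\in(0,\varepsilon/2)$,
\[
\int_{Q_r(z_0)\cap \cV_\varepsilon}|u-(u)_{Q_r(z_0)\cap \cV_\varepsilon}|^2\,dz
\le Nr^2\int_{Q_r(z_0)\cap \cV_\varepsilon}\big(|Du|^2+|\fg|^2+r^2|\ff|^2\big)\,dz
\le Nr^{d+4}M^2,
\]
and Campanato's characterization on $\cV_\varepsilon$ yields \eqref{eq1.48} in one stroke, with no case split and no appeal to the trace at the vertex. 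Your approach can be repaired---for instance by centering $\phi$ in a ball $B_{cr}(y_0)\subset B_r(x_0)\cap\cC_\varepsilon$ (such a ball exists with $c$ depending only on the cone aperture, since $\cC_\varepsilon$ is Lipschitz) and using convexity of $\cC_\varepsilon$ for the spatial comparison---but this adds work that the Poincar\'e/Campanato route avoids entirely.
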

\begin{proof}
Rewrite \eqref{parabolic} as
$$
-u_t+D_\alpha(A^{\alpha\beta}D_\beta u)=D_\alpha(g_\alpha-B^{\alpha}u)+f-\hat B^{\alpha}D_\alpha u- C u:=D_\alpha \fg_\alpha+\ff,
$$
where
\begin{equation}
                    \label{eq328.20.58}
\fg_\alpha=g_\alpha-B^{\alpha}u,\quad
\ff=f-\hat B^{\alpha}D_\alpha u- C u.
\end{equation}
We fix $z_0\in \overline{\cV_{\varepsilon/2}}$ and take $r\in (0,\varepsilon/2)$. By Lemma \ref{paraPoin} with $\Omega$ replaced by $\cC_\varepsilon$, we have
$$
\int_{Q_r(z_0)\cap \cV_\varepsilon}|u-(u)_{Q_r(z_0)\cap \cV_\varepsilon}|^2\,dz
\le Nr^2\int_{Q_r(z_0)\cap \cV_\varepsilon}\left(|Du|^2+|\fg|^2+r^2|\ff|^2\right)
$$
$$
\le Nr^{n+4}\left(|Du|_{0;\cV_{\varepsilon}}+|u|_{0;\cV_{\varepsilon}}
+|g|_{0;\cV_{\varepsilon}}+|f|_{0;\cV_{\varepsilon}}\right)^2.
$$
The inequality \eqref{eq1.48} then follows from Campanato's characterization of H\"older continuous functions.
\end{proof}

For the proof of Theorem \ref{thm1}, we use an idea in \cite{DK11} to locally ``cut off'' the boundary.

Since $\Omega$ is a $C^{1,\delta}$ domain, there exist constants $\mu$ and $R_0\in (0,1/2)$ depending on $\Omega$ such that for any $R\in (0,R_0)$ we have
$$
 B_R\cap \partial\Omega\subset \{|x^d|\le \mu R^{1+\delta}\},\quad \cC_R\subset \Omega_R,
$$
and, for any $x\in \cC_R$, $B_{|x|/2}(x)\subset \Omega_1$.
We fix an $R\in (0,R_0)$ and take a smooth function $\chi$ defined on $\bR$ such that
$$
\chi(x^d)\equiv 0\quad\text{for}\,\,x^d\le \mu R^{1+\delta},
\quad \chi(x^d)\equiv 1\quad\text{for}\,\,x^d\ge  2\mu R^{1+\delta},
$$
$$
|\chi'|\le \frac 2 \mu R^{-1-\delta}.
$$

\begin{lemma}
Let $\hat u:=\chi u$. Then $\hat u$ vanishes on $\cD_R \cap \{x^d \le \mu R^{1+\delta}\}$ and satisfies in $\hat \cD_{R}:=\cD_{R}\cap \{x^d> \mu R^{1+\delta}\}$,
\begin{equation}
                                    \label{eq17.23b}
\cP_0 \hat u=\Div m+ \chi (\Div \fg+\ff)+h_1+h_2,
\end{equation}
where $\bar A^{\alpha\beta}=A^{\alpha\beta}(0,x^d)$,
$$
\cP_0=-D_t+D_\alpha(\bar A^{\alpha\beta}D_\beta),\quad
m_\alpha^i(z)=(\bar A^{\alpha\beta}_{ij}-A^{\alpha\beta}_{ij})
D_\beta u^j,
$$
and
\begin{equation*}
h_1=D_\alpha\big(\bar A^{\alpha\beta} D_{\beta}((\chi-1)u)\big),\quad
h_2=(1-\chi)D_\alpha(A^{\alpha\beta} D_{\beta} u).
\end{equation*}
\end{lemma}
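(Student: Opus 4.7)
The plan is to verify \eqref{eq17.23b} by a direct distributional computation, exploiting the decomposition $\hat u = u + (\chi-1)u$ together with the fact that $\chi$ depends only on $x^d$. The vanishing of $\hat u$ on $\cD_R \cap \{x^d \le \mu R^{1+\delta}\}$ is immediate from the definition of $\chi$, so the content is entirely in deriving the equation on $\hat\cD_R$.

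First I would rewrite the original equation for $u$ as in Lemma~\ref{lem4.9}, namely
$$-u_t + D_\alpha(A^{\alpha\beta} D_\beta u) = \Div \fg + \ff,$$
with $\fg, \ff$ as in \eqref{eq328.20.58}. Replacing the leading coefficients by their frozen versions $\bar A^{\alpha\beta}(x^d) = A^{\alpha\beta}(0, x^d)$ and collecting the commutator into the divergence term $\Div m$ yields
$$\cP_0 u = \Div m + \Div \fg + \ff.$$
Next, since $\chi = \chi(x^d)$ is independent of $t$,
$$\cP_0 \hat u = \cP_0 u + \cP_0\bigl((\chi-1)u\bigr) = \cP_0 u - (\chi-1)u_t + h_1,$$
where the spatial divergence term is exactly $h_1$ as defined in the statement.

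Substituting the formula for $\cP_0 u$ and splitting $\Div \fg + \ff = \chi(\Div\fg+\ff) + (1-\chi)(\Div\fg+\ff)$, I would combine $(1-\chi)(\Div\fg+\ff)$ with the $(1-\chi)u_t$ coming from the previous step. The key identity is that, from the original equation, $\Div \fg + \ff + u_t = D_\alpha(A^{\alpha\beta} D_\beta u)$, so
$$(1-\chi)\bigl(\Div\fg + \ff + u_t\bigr) = (1-\chi)D_\alpha(A^{\alpha\beta} D_\beta u) = h_2.$$
Assembling these identities produces \eqref{eq17.23b} exactly.

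There is no genuine obstacle here — the derivation is a bookkeeping computation, valid first for smooth data and then by a standard density/approximation argument in the weak sense. The one point worth flagging is the appearance of $h_2$ with the non-divergence-looking factor $(1-\chi)$ sitting outside $D_\alpha(A^{\alpha\beta}D_\beta u)$: this is not a regularity issue because we identify it through the equation with $(1-\chi)(\Div\fg+\ff+u_t)$, which is well-defined distributionally. The reason for the whole cut-off construction, which becomes relevant in the subsequent use of this lemma, is that $\hat u$ now solves a constant-in-$x'$ coefficient system on a domain with flat boundary $\{x^d = \mu R^{1+\delta}\}$, so that Lemma~\ref{lem3.2} applies to $\hat u$, while the error terms $m$, $h_1$, $h_2$, and $(1-\chi)(\Div\fg+\ff)$ are all supported in the thin strip $\{x^d \le 2\mu R^{1+\delta}\}$ and can be controlled via a Hardy-type inequality.
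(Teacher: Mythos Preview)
Your derivation is correct and amounts to the same direct computation as the paper, which records it in one line: ``multiply the equation of $u$ by $\chi$ and then add $h_2$ to both sides.'' Your route via the decomposition $\hat u = u + (\chi-1)u$ and the identity $(1-\chi)(\Div\fg+\ff+u_t)=h_2$ is just a reorganization of the same algebra.

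One correction to your closing commentary (not to the proof itself): the term $m$ is \emph{not} supported in the thin strip. It is present throughout $\hat\cD_R$ and is controlled instead by the H\"older regularity of $A$ in $z'$, via $|m|\le N|z'|^{\delta}|Du|$. Only $h_1$, $h_2$, and the factor $(\chi-1)$ are supported in $\{x^d\le 2\mu R^{1+\delta}\}$; also, once you have written the equation as in \eqref{eq17.23b}, the piece $(1-\chi)(\Div\fg+\ff)$ no longer appears separately---it has been absorbed into $h_2$.
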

\begin{proof}
This can be easily seen if one begins with multiplying the equation of $u$ by $\chi$ and then adding $h_2$ to the both sides.
\end{proof}

Now we are ready to prove Theorem \ref{thm1}. Choose a $q>d+2$ sufficiently large such that
$$
1-(d+2)/q>\delta,\quad (d+2+\delta)(1-2/q)>d+2+\delta/2.
$$
Let $r_0$ be the number in Corollary \ref{cor3.1} with this $q$. We take $\delta/4<\gamma<1$, and $0<r<R\le R_1:=\min\{r_0,R_0\}$. Recall the definitions of $\fg$ and $\ff$ in \eqref{eq328.20.58}.
Let $$
u_0(x^d)=\int_{\mu R^{1+\delta}}^{x^d}(\bar A^{dd}(s))^{-1}\fg_d(0,s)\,ds,
\quad u_e=\hat u-u_0.
$$
Clearly, by \eqref{eq17.23b} $u_e$ satisfies
\begin{equation*}
\cP_0 u_e=
(\chi -1)\Div \fg+\Div(\fg-\fg(0,x^d)+m)+\chi \ff+h_1+h_2,
\end{equation*}
and $u_e=0$ on $\cD_R\cap \{x^d=\mu R^{1+\delta}\}$.

Let $v$ be a weak solution to the equation
\[
\left\{
  \begin{aligned}
    \cP_{0} v= (\chi -1)\Div \fg+\Div(\fg-\fg(0,x^d)+m)+\chi \ff+h_1+h_2 \quad & \hbox{in $\hat \cD_{R}$;} \\
    v=0 \quad & \hbox{on $\partial_p \hat \cD_{R}$.}
  \end{aligned}
\right.
\]
Multiplying the both sides of equation of $v$ by $v$ and integrating by parts, we obtain
\begin{align}
\int_{\hat \cD_R} |D v|^2\,dz \le N &\int_{\hat \cD_R}|\fg D((\chi -1) v)|
+|(\fg-\fg(0,x^d)+m)D v|\nonumber\\&
+|\chi \ff v|+|D((\chi-1)u)Dv|+|D((\chi-1)v)Du|\, dz,
                                \label{eq:energ}
\end{align}
where $N=N(d,n,\nu,K)$.
Since $v=0$ on $\cD_R\cap \{x^d=\mu R^{1+\delta}\}$ and $\chi=1$ for $x^d\ge 2\mu R^{1+\delta}$,  by using the Hardy inequality
\begin{align*}
\int_{\hat \cD_R} |D((\chi -1) v)|^2\, dz &\leq N  \int_{\hat \cD_R\cap \{x^d<2\mu R^{1+\delta}\}} R^{-2(1+\delta)} | v|^2 + |Dv|^2\, dz\\&
\leq N \int_{\hat \cD_R\cap \{x^d<2\mu R^{1+\delta}\}} |x^d-\mu R^{1+\delta}|^{-2} | v|^2 + |Dv|^2\, dz\\
&\leq  N\int_{\hat \cD_R\cap \{x^d<2\mu R^{1+\delta}\}} |Dv|^2\, dz,
\end{align*}
where $N=N(d,n,\mu)$.  Similarly, since $u=0$ on $\Gamma_R$ and $x^d\ge -\mu R^{1+\delta}$ for $x\in \cD_R$, we have
\begin{align*}
\int_{\hat \cD_R} |D((\chi -1) u)|^2\, dz &\leq   \int_{ \cD_R\cap \{x^d<2\mu R^{1+\delta}\}}  |D((\chi -1) u)|^2\, dz \\
&\leq  N\int_{\cD_R\cap \{x^d<2\mu R^{1+\delta}\}} |D u|^2\, dz.
\end{align*}
Together with the Cauchy inequality and Poincar\'e inequality, we then deduce from \eqref{eq:energ} that
\begin{align*}
&\|Dv\|_{L_2(\hat \cD_{R})}+R^{-1} \|v\|_{L_2(\hat \cD_{R})}\\
&\le N \big(\|\fg\|_{L_2(\hat\cD_{R}\cap \{x^d<2\mu  R^{1+\delta}\})}+
\|\fg(z)-\fg(0,x^d)+m\|_{L_2(\hat\cD_{R})}\\
&\quad +R\|\ff\|_{L_2(\hat\cD_{R}))}+  \|Du\|_{L_2(\cD_{R}\cap \{x^d<2\mu  R^{1+\delta}\})}\big).
\end{align*}
It follows from the H\"older continuity of  $A^{d\beta}$, $B^d$, and $g_d$ with respect to $z'$ and the H\"older inequality that
\begin{align}
                                 \label{eq4.34}
&\|Dv\|_{L_2(\hat \cD_{R})}+R^{-1} \|v\|_{L_2(\hat \cD_{R})} \nonumber\\&
\le  NR^{d/2+1+\delta/4}\big(|\fg|_{z',\delta/2,\delta;\cD_{2/3}}+
|f|_{0;\cD_{2/3}}+\|Du\|_{L_q(\cD_{R})}+|u|_{0;\cD_{R}}\big) ,
\end{align}
where we used $(d+2+\delta)(1-2/q)>d+2+\delta/2$.

Let $w:=u_e-v$. Then $w$ satisfies $\cP_{0} w = 0$ in $\hat \cD_{R}$ and $w=0$ on $\cD_R\cap \{x^d=\mu R^{1+\delta}\}$. Denote
\begin{equation*}
W=\bar A^{d\beta}(x^d)D_{\beta}w,\quad \hat\cD_r=\cD_r\cap \{x^d>\mu R^{1+\delta}\}.
\end{equation*}
By Lemma \ref{lem3.2}, we have
\begin{align}
&\int_{\hat \cD_r}|D_{x'}w|^2+|W-(W)_{\hat \cD_r}|^2\,dz\nonumber\\
&\,\le N(r/R)^{d+2+2\gamma}
\int_{\hat \cD_R}|D_{x'}w|^2+|W-(W)_{\hat \cD_R}|^2\,dz.
                                    \label{eq23.33}
\end{align}

Denote
$U_e:=\bar A^{d\beta}(x^d)D_{\beta}u_e=\bar A^{d\beta}D_\beta \hat u-\fg_d(0,x^d)$.
By the definition of $u_0$ and $u_e$, $D_{x'}u_e=D_{x'}\hat u$. We then combine \eqref{eq4.34} with \eqref{eq23.33} and use the triangle inequality to obtain
\begin{align}
&\int_{\hat\cD_r}|D_{x'}\hat u|^2+|U_e-(U_e)_{\hat \cD_r}|^2\,dz\nonumber\\
&\,= \int_{\hat\cD_r}|D_{x'}u_e|^2+|U_e-(U_e)_{\hat \cD_r}|^2\,dz\nonumber\\
&\,\le N_1(r/R)^{d+2+2\gamma}
\int_{\hat\cD_R}|D_{x'}u_e|^2+|U_e-(U_e)_{\hat\cD_R}|^2\,dz\nonumber\\
&\quad +NR^{d+2+\delta/2}\big(|\fg|_{z',\delta/2,\delta;\cD_{2/3}}+
|f|_{0;\cD_{2/3}}+\|Du\|_{L_q(\cD_{R})}+|u|_{0;\cD_{R}}\big)^2.
                                    \label{eq10.00}
\end{align}
where $N_1=N_1(d,n,\nu,\gamma)$.

Using the H\"older continuity of $A^{d\beta}$, $B^d$, and $g_d$ with respect to $z'$, \eqref{eq10.00} holds with
$$
U:=A^{d\beta}D_\beta u+B^d u-g_d.
$$ in place of $U_e$.
In addition, since $u-\hat u=(1-\chi)u$ is supported in a thin strip $\cD_R\cap \{|x^d|\le 2\mu R^{1+\delta}\}$, by the H\"older inequity
\begin{align*}
\int_{\hat\cD_r}|D_{x'} u|^2 \, dz &\le N \int_{\hat\cD_r}|D_{x'}\hat u|^2\, dz + N \int_{\hat\cD_r\cap \{x^d\le 2\mu R^{1+\delta}\}}|D_{x'} u|^2\\&
\le N \int_{\hat\cD_r}|D_{x'}\hat u|^2\, dz + N R^{d+2+\delta/2}\|D_{x'}u\|_{L_q(\cD_{R})}^2.
\end{align*}
Also,
\begin{align*}
\int_{\hat\cD_R}|D_{x'} u_e|^2 \, dz&=\int_{\hat\cD_R}|D_{x'} \hat u|^2 \, dz\\
&\le N \int_{\hat\cD_R}|D_{x'} u|^2\, dz + N R^{d+2+\delta/2}\|D_{x'}u\|_{L_q(\cD_{R})}^2.
\end{align*}
With these three observations,  we deduce from \eqref{eq10.00} that for any $0<r<R\le R_1$,
\begin{align*}
&\int_{\cD_r}|D_{x'} u|^2+| U-( U)_{\cD_r}|^2\,dz\nonumber\\
&\,\le N_1(r/R)^{d+2+2\gamma}
\int_{\cD_R}|D_{x'} u|^2+|U-(U)_{\cD_R}|^2\,dz\nonumber\\
&\quad +NR^{d+2+\delta/2}\big(|\fg|_{z',\delta/2,\delta;\cD_{2/3}}+
|f|_{0;\cD_{2/3}}+\|Du\|_{L_q(\cD_{R})}+|u|_{0;\cD_{R}}\big)^2.
\end{align*}
Since $\delta/2<2\gamma$, by a well-known iteration argument (see, for instance, \cite[Chapter 3.2]{Giaq93}) we obtain for any $r<R_1$,
\begin{align}
&\int_{\cD_r}|D_{x'} u|^2+| U-( U)_{\cD_r}|^2\,dz\nonumber\\
&\,\le Nr^{d+2+\delta/2}\big(|\fg|_{z',\delta/2,\delta;\cD_{2/3}}+
|f|_{0;\cD_{2/3}}+\|Du\|_{L_q(\cD_{R_1})}+|u|_{0;\cD_{R_1}}\big)^2\nonumber\\
&\,\le Nr^{d+2+\delta/2}\big(|g|_{z',\delta/2,\delta;\cD_{2/3}}+
|f|_{0;\cD_{2/3}}+\|u\|_{L_2(\cD_{1})}\big)^2,
                                    \label{eq4.45b}
\end{align}
where in the last inequality we used Corollary \ref{cor3.1}. Shifting the $t$-coordinate in \eqref{eq4.45b}, we have
\begin{align}
&\int_{\cD_r(t_0,0)}|D_{x'} u|^2+| U-( U)_{\cD_r(t_0,0)}|^2\,dz\nonumber\\
&\quad \le Nr^{d+2+\delta/2}\big(|g|_{z',\delta/2,\delta;\cD_1}+
|f|_{0;\cD_{1}}+\|u\|_{L_2(\cD_{1})}\big)^2
                                    \label{eq4.45d}
\end{align}
for any $0<r<R_1$ and $t_0\in [-1/4,0]$.

Now we fix a $z_0=(t_0,x_0)\in \overline{\cV_{R_1/2}}$ and denote $r_{x_0}=|x_0|/2$. By the definition of $R_1$, we have $B_{r_{x_0}}(x_0)\subset \Omega_1$ and $Q_{r_{x_0}}(x_0)\subset \cD_1$. Similar to \eqref{eq4.45d} (see (32) and (33) of \cite{Dong12}), for any $r\le r_{x_0}$, it holds that
\begin{align}
&\int_{Q_r(z_0)}|D_{x'} u-(D_{x'} u)_{Q_r(z_0)}|^2+| U-( U)_{Q_r(z_0)}|^2\,dz\nonumber\\
&\,\le N(r/r_{x_0})^{d+2+2\delta}\int_{Q_{r_{x_0}}(z_0)}|D_{x'} u-(D_{x'} u)_{Q_{r_{x_0}}(z_0)}|^2+| U-( U)_{Q_{r_{x_0}}(z_0)}|^2\,dz\nonumber\\
&\,\,\,+Nr^{d+2+2\delta}\big(|g|_{z',\delta/2,\delta;\cD_1}+
|f|_{0;\cD_{1}}+\|u\|_{L_2(\cD_{1})}\big)^2\nonumber\\
&\,\le N(r/r_{x_0})^{d+2+\delta/2}\int_{Q_{r_{x_0}}(z_0)}|D_{x'} u|^2+| U-( U)_{Q_{r_{x_0}}(z_0)}|^2\,dz\nonumber\\
&\,\,\,+Nr^{d+2+\delta/2}\big(|g|_{z',\delta/2,\delta;\cD_1}+
|f|_{0;\cD_{1}}+\|u\|_{L_2(\cD_{1})}\big)^2.
                                    \label{eq4.45c}
\end{align}
Because $Q_{r_{x_0}}(z_0)\subset \cD_{3r_{x_0}}(t_0,0)$ and $3r_{x_0}<R_1$, using \eqref{eq4.45d} the first integral on the right-hand side of \eqref{eq4.45c} is bounded by
\begin{align*}
&N(r/r_{x_0})^{d+2+\delta/2}\int_{\cD_{3r_{x_0}}(t_0,0))}|D_{x'} u|^2+| U-( U)_{\cD_{3r_{x_0}}(t_0,0)}|^2\,dz\\
&\,\le Nr^{d+2+\delta/2}\big(|g|_{z',\delta/2,\delta;\cD_1}+
|f|_{0;\cD_{1}}+\|u\|_{L_2(\cD_{1})}\big)^2.
\end{align*}
On the other hand, for any $r\in (r_{x_0},R_1/3)$, we have $Q_{r}(z_0)\subset \cD_{3r}(t_0,0)$ and $3r<R_1$. Therefore, by \eqref{eq4.45d},
\begin{align*}
&\int_{Q_{r_{x_0}}(z_0)}|D_{x'} u-(D_{x'} u)_{Q_{r_{x_0}}(z_0)}|^2+| U-( U)_{Q_{r_{x_0}}(z_0)}|^2\,dz\\
&\,\le N \int_{\cD_{3r}(t_0,0)}|D_{x'} u|^2+| U-(U)_{\cD_{3r}(t_0,0)}|^2\,dz\\
&\,\le Nr^{d+2+\delta/2}\big(|g|_{z',\delta/2,\delta;\cD_1}+
|f|_{0;\cD_{1}}+\|u\|_{L_2(\cD_{1})}\big)^2
\end{align*}
By Campanato's characterization of H\"older continuous functions, $D_{x'}u$ and $U$ are H\"older continuous in $\overline{\cV_{R_1/2}}$.
Theorem \ref{thm1} then follows immediately from Lemma \ref{lem4.9}.

\mysection{Proof of Theorem \ref{thm2}}
                                           \label{sec5}

We shall use the following interior H\"older estimate, which is of independent interest. In the 2D elliptic case, this result is classical, which follows directly from the reverse H\"older inequality and the Sobolev embedding theorem.

\begin{proposition}
                    \label{prop5.1}
Suppose that $A$ is partially VMO with respect to $z''$, $g\in L_{p_0}(Q_1)$ for some $p_0>d+2$, and $f\in L_{p_0^*}(Q_1)$, where
$$p_0^*=p_0(d+2)/(p_0+d+2).$$
Let $u$ be a weak solution to \eqref{parabolic} in $Q_1$. Let $\delta_0$ be the constant in Lemma \ref{lem3.5} and $\delta_1=\min(\delta_0,1-(d+2)/p_0)$. Then for any $\delta\in (0,\delta_1)$, we have $u\in C^{\delta/2,\delta}(\overline{Q_{1/2}})$. Moreover,
\begin{equation*}
|u|_{\delta/2,\delta;Q_{1/2}}
\le N(\|g\|_{L_{p_0}(Q_1)}+\|f\|_{L_{ p_0^*}(Q_1)}
+\|u\|_{L_2(Q_1)}),
\end{equation*}
where $N=N(d,n,\delta,\nu,K,p_0,\omega_{A,z''})$.
\end{proposition}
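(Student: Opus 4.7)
The plan is to apply Campanato's method combined with a freezing argument in the $z''$ variables, with Lemma~\ref{lem3.5} playing the role of the homogeneous decay estimate. Fix $z_0 \in \overline{Q_{1/2}}$ and a scale $r \in (0, 1/8)$. By the remark following the definition of $\omega_{u,z''}$ in Subsection~\ref{ssec2.3}, choose $\bar A^{\alpha\beta} = \bar A^{\alpha\beta}(x^{d-1}, x^d)$ with
\[
\dashint_{Q_r(z_0)} |A - \bar A|\, dz \le N\, \omega_{A,z''}(\sqrt{2}\, r),
\]
and let $v \in \cH^1_2(Q_r(z_0))$ solve the homogeneous frozen system $-v_t + D_\alpha(\bar A^{\alpha\beta} D_\beta v) = 0$ in $Q_r(z_0)$ with $v = u$ on the parabolic boundary $\partial_p Q_r(z_0)$. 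Set $w := u - v$.

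A rescaling of Lemma~\ref{lem3.5} applied to $v$ yields the Campanato decay
\[
\int_{Q_\rho(z_0)} |v - (v)_{Q_\rho}|^2\, dz \le N (\rho/r)^{d+2+2\delta_0} \int_{Q_r(z_0)} |v - (v)_{Q_r}|^2\, dz, \quad \rho \in (0, r/2).
\]
For the remainder $w$, which vanishes on $\partial_p Q_r(z_0)$, the standard energy identity and the Poincar\'e inequality give
\[
\int_{Q_r(z_0)} |Dw|^2\, dz \le N \int_{Q_r(z_0)} \bigl[|(\bar A - A) Du|^2 + |g|^2 + r^2 |f|^2\bigr]\, dz + (\text{l.o.t.}),
\]
where lower-order contributions from $B$, $\hat B$, $C$ are absorbed at small scales. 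For the first term, H\"older's inequality combined with a Meyers--Gehring reverse H\"older bound for $Du$ (valid without any regularity on $A$, as already used in the proof of Lemma~\ref{lem3.5}) yielding $Du \in L^q_{\text{loc}}$ for some $q = q(d,n,\nu) > 2$, together with the $L^1$-smallness from partial VMO, produces
\[
\int_{Q_r(z_0)} |(\bar A - A) Du|^2\, dz \le N\, \omega_{A,z''}(\sqrt{2}\, r)^{1 - 2/q}\, \|Du\|_{L_2(Q_{2r}(z_0))}^2;
\]
a Caccioppoli estimate then bounds $\|Du\|_{L_2(Q_{2r})}^2$ by $r^{-2} \int_{Q_{4r}} |u - (u)_{Q_{4r}}|^2\, dz$ plus data. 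The $g$ and $f$ terms contribute $r^{d+2+2\delta_1} \Theta^2$ with $\Theta := \|g\|_{L_{p_0}(Q_1)} + \|f\|_{L_{p_0^*}(Q_1)} + \|u\|_{L_2(Q_1)}$, by H\"older's inequality and the parabolic Sobolev identity $1/p_0^* = 1/p_0 + 1/(d+2)$.

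Writing $u = v + w$, the triangle inequality and the Poincar\'e inequality on $w$ produce
\[
\Phi(\rho) \le N\bigl[(\rho/r)^{d+2+2\delta_0} + \eta(r)\bigr]\Phi(cr) + N r^{d+2+2\delta_1} \Theta^2
\]
for some absolute constant $c > 1$, all $\rho \in (0, r/2)$, and all small $r$, where $\Phi(\rho) := \int_{Q_\rho(z_0)} |u - (u)_{Q_\rho}|^2\, dz$ and $\eta(r) := \omega_{A,z''}(\sqrt{2}\, r)^{1 - 2/q} \to 0$ as $r \to 0$. A standard iteration lemma (as in \cite[Chapter~3.2]{Giaq93}) then gives $\Phi(\rho) \le N \rho^{d+2+2\delta} \Theta^2$ for every $\delta < \delta_1$ and $\rho$ small, and Campanato's characterization yields the claimed H\"older estimate on $Q_{1/2}$ after a covering argument. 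The main obstacle is aligning the two small parameters --- the VMO-smallness $\eta(r)$ and the polynomial inhomogeneity $r^{d+2+2\delta_1}$ --- with the Campanato decay exponent $d+2+2\delta_0$: this is possible precisely because the Meyers exponent $q$ is strictly greater than $2$, so $\eta(r) \to 0$, and because $\delta_1 \le \delta_0$ by hypothesis. A minor technical point is the mismatch of scales $\Phi(cr)$ vs.\ $\Phi(r)$ in the recursion, which is handled by the usual geometric-sequence bookkeeping in Giaquinta's lemma.
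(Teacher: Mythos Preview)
Your proposal is correct and follows essentially the same approach as the paper's proof: freeze $A$ in $z''$ at scale $r$, decompose $u=v+w$ with $v$ solving the frozen homogeneous system and $w$ vanishing on the parabolic boundary, apply Lemma~\ref{lem3.5} (rescaled) to $v$, control $w$ via the energy identity plus reverse H\"older (Giaquinta--Struwe) and Caccioppoli, and close with the iteration lemma from \cite[Chapter~3.2]{Giaq93} and Campanato's characterization. The only cosmetic differences are that the paper works at the origin and then shifts coordinates, and it tracks the lower-order terms explicitly (producing an extra $R^2$ in the smallness factor and an $\|u\|_{L_{p_0}}$ term, which you subsume under ``l.o.t.''), but the logic is identical.
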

\begin{proof}
Let $R_0\in (0,1/8)$ be a number to be specified later. We take $0<r<R\le R_0$. First, note that as a weak solution of a linear uniformly parabolic system,
$u$ satisfies the following improved integrability estimate
for some $p=p(d,n,\nu)>2$ (see \cite[Theorem 2.1]{GS}):
\begin{equation}
            \label{eq5.3}
\Big(\dashint_{Q_R}|D u|^p\,dz\Big)^{2/p} \le
N\,\dashint_{Q_{2R}}|D u|^2\,dz.
\end{equation}

By the remark at the end of Section \ref{ssec2.3}, we can find $\bar A=\bar A(x^{d-1},x^d)$ depending on $R$ such that
\begin{equation}
                    \label{eq2.03}
\dashint_{Q_R}|A(t,x'',x^{d-1},x^d)-
\bar A(x^{d-1},x^d)|\,dz\le N(d)\,\omega_{A,z''}(\sqrt 2 R).
\end{equation}
We decompose $u=v+w$, where $v$ is the weak solution of
\begin{align}
                                    \label{eq:v}
-v_t+D_\alpha\big(\bar{A}^{\alpha\beta} D_\beta v\big)=0
\quad\text{in }Q_R
\end{align}
with $v=u$ on $\partial_p Q_R$, and $w$ is the weak solution of
\begin{align}
\nonumber
&-w_t+D_\alpha\big(\bar{A}^{\alpha\beta}D_\beta w\big)\\
                \label{eqn:A-10}
&\,\,=D_\alpha\big((\bar {A}^{\alpha\beta}-A^{\alpha\beta})
D_\beta u\big)
-D_\alpha(B^{\alpha}u)-\hat B^{\alpha}D_\alpha u- C u+\Div g + f
\end{align}
in $Q_R$
with $w=0$ on $\partial_p Q_R$.

Since $B^\alpha$, $\hat B^\alpha$, and $C$ are bounded by $K$, multiplying both sides of \eqref{eqn:A-10} by $w$ and integrating by parts we get
\begin{align*}
\int_{Q_R}R^{-2}|w|^2+|Dw|^2\,dz & \leq
N\int_{Q_R}|A-\bar A| |Du Dw|+|u Dw|+|g Dw|\, dz \\ & \quad +N\int_{Q_R}(|Du|+|u|+|f|)|w|\,dz,
\end{align*}
where $N=N(d,n,\nu,K)$.
It follows from the Cauchy inequality and the Poincar\'e inequality that
\begin{equation}
\begin{split}
\int_{Q_R}|Dw|^2\,dz
\le & N\int_{Q_R}|A-\bar A|^2 |Du|^2+|u|^2+|g|^2\, dz\\& +N\int_{Q_R} R^2(|Du|^2+|f|^2)\,dz.
               \label{eq:1}
\end{split}
\end{equation}
By Lemma \ref{paraPoin}, the H\"older inequality, \eqref{eq5.3}, and \eqref{eq2.03}, from \eqref{eq:1} we have
\begin{align*}
&\int_{Q_R}|w-(w)_{Q_R}|^2\,dz \\
&\leq
NR^2 \int_{Q_R}|A-\bar A|^2 |Du|^2+|u|^2+|g|^2+R^2(|Du|^2+f^2)\,dz\\
&\leq
NR^2 \big(\int_{Q_R} |A-\bar A|^q \,dz\big)^{2/q}\big(\int_{Q_R} |Du|^p\,dz\big)^{2/p}\\
&\quad+NR^2 \int_{Q_R}|u|^2+|g|^2+R^2(|Du|^2+f^2)\,dz\\
&\leq N R^2(\omega_{A,z''}^{2/q}(\sqrt 2 R)+R^2)
\int_{Q_{2R}}|D u|^2\,dz\\
&\quad+R^{2+(d+2)(1-2/p_0)}\big(\|u\|^2_{L_{p_0}(Q_{1/8})}+\|g\|^2_{L_{p_0}(Q_{1/8})}
+\|f\|^2_{L_{p^*_0}(Q_{1/8})}\big),
\end{align*}
where
$$
q=2p/(p-2),\quad 2+(d+2)(1-2/p_0)>d+2.
$$
It follows from the Caccioppoli inequality that
\begin{align}
&\int_{Q_R}|w-(w)_{Q_R}|^2\,dz \nonumber\\
&\leq N (\omega_{A,z''}^{2/q}(\sqrt 2 R)+R^2)
\int_{Q_{4R}}|u-(u)_{Q_{4R}}|^2\,dz\nonumber\\
                    \label{eq5.4}
&\quad+R^{2+(d+2)(1-2/p_0)}\big(\|u\|^2_{L_{p_0}(Q_{1/2})}+\|g\|^2_{L_{p_0}(Q_{1/2})}+\|f\|^2_{L_{p^*_0}(Q_{1/2})}\big).
\end{align}
Since $v-(v)_{Q_R}$ also satisfies \eqref{eq:v}, applying Lemma \ref{lem3.5} with a proper scaling to $v-(v)_{Q_R}$ yields
\begin{equation}
                    \label{eqn:A-11}
\int_{Q_r}|v-(v)_{Q_r}|^2\,dz\leq
N(r/R)^{d+2+2\delta_0} \int_{Q_R}|v-(v)_{Q_R}|^2\,dz,
\end{equation}
for some constant $N=N(d,n,\nu)$. Indeed, we may assume $r<R/2$, otherwise it holds trivially. By a direct computation, we have
\begin{align*}
\int_{Q_r}|v-(v)_{Q_r}|^2\,dz & \leq \int_{Q_r}|v-v(0,0)|^2\,dz\leq r^{d+2+2\delta_0} [v]^2_{\delta_0/2,\delta_0;Q_r}\\
&= r^{d+2+2\delta_0} [v-(v)_{Q_R}]^2_{\delta_0/2,\delta_0;Q_r}\\
& \leq N (r/R)^{d+2+2\delta_0} \int_{Q_R}|v-(v)_{Q_R}|^2\,dz.
\end{align*}

Combining \eqref{eq5.4} and \eqref{eqn:A-11} and using the triangle inequality, we see that
\begin{align*}
&\int_{Q_r}|u-(u)_{Q_r}|^2 \,dz\\
&\leq
N\big((r/R)^{d+2+2\delta_0}+\omega_{A,z''}^{2/q}(\sqrt 2 R)+R^2\big)
\int_{Q_{4R}}|u-(u)_{Q_{4R}}|^2\,dz\\
&\quad +R^{2+(d+2)(1-2/p_0)}\big(\|u\|^2_{L_{p_0}(Q_{1/2})}+\|g\|^2_{L_{p_0}(Q_{1/2})}+\|f\|^2_{L_{p^*_0}(Q_{1/2})}\big).
\end{align*}
If we choose $R_0$ sufficiently small so that $\omega_{A,z''}^{2/q}(\sqrt 2 R_0)+R_0^2$
is small, then by a well-known iteration argument (see, for instance, \cite[Chapter 3.2]{Giaq93}) we obtain for any $\delta\in (0,\delta_1)$,
\begin{equation*}
\int_{Q_r}|u-(u)_{Q_r}|^2\,dz \\
\leq Nr^{d+2+2\delta}\big(\|u\|^{2}_{L_{p_0}(Q_{1/2})}+\|g\|^2_{L_{p_0}(Q_{1/2})}+\|f\|^2_{L_{p^*_0}(Q_{1/2})}\big).
\end{equation*}
Shifting the coordinates, we get for any $z_0\in \overline{Q_{1/2}}$ and $r\in (0,1/8)$,
\begin{equation*}
\int_{Q_r(z_0)}|u-(u)_{Q_r(z_0)}|^2\,dz \\
\leq Nr^{d+2+2\delta}\big(\|u\|^2_{L_{p_0}(Q_{1})}+\|g\|^2_{L_{p_0}(Q_{1})}
+\|f\|^2_{L_{p^*_0}(Q_{1})}\big).
\end{equation*}
By Campanato's characterization of H\"older continuous functions, we conclude that $u\in C^{\delta/2,\delta}(\overline{Q_{1/2}})$ and
\[
[u]_{\delta/2,\delta;Q_{1/2}}\leq \big(\|u\|_{L_{p_0}(Q_{1})}+\|g\|_{L_{p_0}(Q_{1})}
+\|f\|_{L_{p^*_0}(Q_{1})}\big).
\]
The proposition then follows by using a standard iteration argument. See, for instance, \cite[pp. 81]{Giaq93}.
\end{proof}

Next we consider parabolic systems with the same type of the coefficients in a half cylinder. Denote
$$
Q_r^+=Q_r\cap \{x^d>0\},\quad \Gamma_r^+=Q_r\cap \{x^d=0\}.
$$
By using odd and even extensions, we deduce the following corollary from Proposition \ref{prop5.1}.

\begin{corollary}
                            \label{cor5.2}
Suppose that $A$ is partially VMO with respect to $z''$, $g\in L_{p_0}(Q_1^+)$ for some $p_0>d+2$, and $f\in L_{p_0^*}(Q_1^+)$, where $p_0^*=p_0(d+2)/(p_0+d+2)$. Let $u$ be a weak solution to \eqref{parabolic} in $Q_1^+$ with the zero Dirichlet boundary condition on $\Gamma_1$. Let $\delta_0$ be the constant in Lemma \ref{lem3.5} and $\delta_1=\min(\delta_0,1-(d+2)/p_0)$. Then for any $\delta\in (0,\delta_1)$, we have $u\in C^{\delta/2,\delta}(\overline{Q_{1/2}^+})$. Moreover,
\begin{equation*}
|u|_{\delta/2,\delta;Q_{1/2}^+}
\le N(\|g\|_{L_{p_0}(Q_1^+)}+\|f\|_{L_{ p_0^*}(Q_1^+)}
+\|u\|_{L_2(Q_1^+)}),
\end{equation*}
where $N=N(d,n,\delta,\nu,K,p_0,\omega_{A,z''})$.
\end{corollary}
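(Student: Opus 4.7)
The plan is to extend $u$, the coefficients, and the data across the hyperplane $\{x^d=0\}$ using reflections whose parities are chosen compatibly with the structure of the system, and then invoke Proposition \ref{prop5.1} directly in the full cylinder $Q_1$.

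\textbf{Setting up the extension.} First I would define $\tilde u(t,x',x^d) = u(t,x',x^d)$ for $x^d>0$ and $\tilde u(t,x',x^d) = -u(t,x',-x^d)$ for $x^d<0$. Because $u=0$ on $\Gamma_1^+$ in the trace sense, $\tilde u \in \cH^1_2(Q_1)$. Next I would extend the coefficients as follows: $\tilde A^{\alpha\beta}$, $\tilde B^\alpha$, $\tilde{\hat B}^\alpha$, $\tilde C$, $\tilde g_\alpha$, $\tilde f$ are defined on $\{x^d<0\}$ by even reflection in $x^d$ precisely when the number of indices equal to $d$ (counting the ``vectorial'' index $\alpha$ of $f$ or $u$ itself as a ``missing'' index) is even, and by odd reflection otherwise. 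Concretely: $\tilde A^{\alpha\beta}$, $\tilde B^\alpha$, $\tilde{\hat B}^\alpha$ are even in $x^d$ when neither or both of their indices equal $d$ and odd when exactly one does; $\tilde C$ is even; $\tilde g_\alpha$ is odd for $\alpha\ne d$ and even for $\alpha=d$; and $\tilde f$ is odd.

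\textbf{Verifying the ingredients of Proposition \ref{prop5.1}.} With this choice, for $x^d<0$ the substitution $\eta_i^\alpha = \xi_i^\alpha$ ($\alpha\ne d$), $\eta_i^d = -\xi_i^d$ reduces $\tilde A^{\alpha\beta}_{ij}(x) \xi_i^\alpha\xi_j^\beta$ to $A^{\alpha\beta}_{ij}(t,x',-x^d)\eta_i^\alpha\eta_j^\beta$, so strong ellipticity with the same constant $\nu$ is preserved. Moreover, since reflection in $x^d$ leaves the variable $z''=(t,x'')$ untouched, the modulus $\omega_{\tilde A, z''}$ is controlled by $\omega_{A,z''}$; in particular $\tilde A$ is still partially VMO with respect to $z''$. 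The boundedness of the lower order coefficients and the $L_{p_0}$, $L_{p_0^*}$ norms of $\tilde g$, $\tilde f$ on $Q_1$ are bounded by twice those of $g$, $f$ on $Q_1^+$.

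\textbf{The weak formulation of the extended equation.} The main technical step is to check that $\tilde u$ is a weak solution of $\tilde\cP \tilde u = \Div\tilde g+\tilde f$ in $Q_1$. For a test function $\phi\in C^\infty_c(Q_1)$, I would decompose $\phi = \phi_e + \phi_o$ into its even and odd parts in $x^d$. Each term of the weak form $\int_{Q_1}(-\tilde u\,\partial_t\phi + \tilde A^{\alpha\beta}D_\beta\tilde u\,D_\alpha\phi + \ldots)$ is a product whose $x^d$-parity can be read off from the rules above; a short case check shows that, tested against $\phi_e$, every term is \emph{odd} in $x^d$, hence integrates to zero on $Q_1$, so the equation holds trivially on the even part. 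Tested against $\phi_o$, which vanishes on $\{x^d=0\}$ and whose restriction to $Q_1^+$ is an admissible test function for the original problem, each term is even in $x^d$ and the integral over $Q_1$ equals twice the integral over $Q_1^+$; this is exactly twice the weak form satisfied by $u$, hence zero. Therefore $\tilde u$ is a weak solution on $Q_1$.

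\textbf{Conclusion.} With these facts in hand, Proposition \ref{prop5.1} applied to $\tilde u$ on $Q_1$ yields $\tilde u\in C^{\delta/2,\delta}(\overline{Q_{1/2}})$ with the displayed bound, which upon restriction to $Q_{1/2}^+$ and the equivalence of norms between $Q_1^+$ and $Q_1$ for the extensions gives the conclusion of Corollary \ref{cor5.2}. I expect the main point that needs care is the weak-formulation computation, where one must track signs consistently across the lower order terms and the forcing data; ellipticity, VMO preservation, and the final iteration/restriction are then routine.
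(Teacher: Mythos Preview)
Your proposal is correct and follows exactly the approach the paper indicates: the paper's own proof is the single sentence ``By using odd and even extensions, we deduce the following corollary from Proposition \ref{prop5.1},'' and you have supplied the details of that extension (odd extension of $u$, parity of each coefficient and datum chosen so that the extended system retains the same structure, ellipticity constant, and partial VMO modulus in $z''$), after which Proposition \ref{prop5.1} applies verbatim.
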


We are now ready to complete the proof of Theorem \ref{thm2}.

Let $\cS$ be the plane spanned by $e_{d-1}$ and $e_d$ so that the normal direction at $x_0$ lies on $\cS$.
Let $\bar y^1,\dots, \bar y^d$  be an orthogonal coordinate system centered at $x_0$ such that
$\bar y^{d-1}=x^d-x^d_0$, and $\bar y^{d}$ is a direction on $\cS$, which is orthogonal to $x^d$.
Since $\partial \Omega \in C^{1,\delta}$ and the inner normal of $\partial \Omega $ at $x_0$ is not paralleled to $x^d$,
locally $\Omega$ can be represented as a $C^{1,\delta}$ graph $( \bar y^1,\dots,  \bar y^{d-1}, \phi( \bar y^1,\dots, \bar y^{d-1}))$
near $x_0$.  We make a change of variables $y^d=\bar y^d-\phi (\bar y^1,\dots, \bar y^{d-1})$
and $y^i=\bar y^i$ for $i=1,\dots, d-1$ to flatten the boundary near $x_0$ so that the parabolic system is
defined on $(-1,0)\times B_{2\epsilon}^+$ and the new leading coefficients are measurable in $y^{d-1}$ and VMO with respect to the other directions.
Then the theorem follows immediately from Corollary \ref{cor5.2}.



\begin{thebibliography}{m}

\bibitem{AL87} M. Avellaneda, F. Lin, Compactness methods in the theory of homogenization, \textit{Comm. Pure Appl. Math.} \textbf{40} (1987), no. 6, 803--847.

\bibitem{BASL} I. Babu\v ska, B. Andersson, P. Smith, K. Levin,
Damage analysis of fiber composites. I. Statistical analysis on fiber scale,
\textit{Comput. Methods Appl. Mech. Engrg.} \textbf{172} (1999), no. 1-4, 27--77.

\bibitem{BV} E. Bonnetier, M. Vogelius, An elliptic regurality result for a composite medium with touching fibers of circular cross-section, \textit{SIAM J. Math. Anal.} \textbf{31} (2000), 651--677.

\bibitem{CKC} M. Chipot, D. Kinderlehrer, G. Vergara-Caffarelli,
Smoothness of linear laminates, \textit{Arch. Rational Mech. Anal.} \textbf{96} (1986), no. 1, 81--96.

\bibitem{Dong13} H. Dong, On elliptic equations in a half space or in convex wedges with irregular coefficients, \textit{Adv. Math.} \textbf{238} (2013), 24--49.

\bibitem{Dong12} H. Dong, Gradient estimates for parabolic and elliptic systems from linear laminates, \textit{Arch. Ration. Mech. Anal.} \textbf{205} (2012), no. 1, 119--149.

\bibitem{DK11b} H. Dong, D. Kim, Parabolic and elliptic systems in divergence form with variably partially BMO coefficients, \textit{SIAM J. Math. Anal.} \textbf{43} (2011), no. 3, 1075--1098.


\bibitem{DK11} H. Dong, D. Kim, Higher order elliptic and parabolic systems with variably partially BMO coefficients in regular and irregular domains, \textit{J. Funct. Anal.} \textbf{261} (2011), no. 11, 3279--3327.

\bibitem{FKNN13} J. Fan, K. Kim, S. Nagayasu, G. Nakamura, A gradient estimate for solutions to parabolic equations with discontinuous coefficients, \textit{Electron. J. Differential Equations} \textbf{2013}, no. 93, 24 pp.

\bibitem{Giaq93} M. Giaquinta, \textit{Introduction to regularity theory for nonlinear elliptic systems}, Lectures in Mathematics ETH Z\"uich.,Birkh\"auser Verlag, Basel, 1993.


\bibitem{GS} M. Giaquinta, M. Struwe,
On the partial regularity of weak solutions of nonlinear parabolic systems,
\textit{Math. Z.} \textbf{179} (1982), no. 4, 437--451.

\bibitem{Krylov_2005}
N.~V. Krylov, Parabolic and elliptic equations with {VMO} coefficients, \textit{Comm. Partial Differential Equations} \textbf{32} (2007), no. 3, 453--475.

\bibitem{LiVo} Y.Y. Li, M. Vogelius, Gradient estimates for solutions to divergence form elliptic equations with discontinuous coefficients, \textit{Arch. Rational Mech. Anal.} \textbf{153} (2000), no. 2, 91--151.

\bibitem{LiNi} Y.Y. Li, L. Nirenberg, Estimates for elliptic systems from composite material, \textit{Comm. Pure Appl. Math.} \textbf{56}  (2003),  no. 7, 892--925.

\bibitem{LiLi} Y.Y. Li, H. Li, Gradient estimates for parabolic systems from
composite material, preprint.

\bibitem{XB}  J. Xiong, J. Bao, Sharp regularity for elliptic systems associated with transmission problems, \textit{Potential Analysis}
            \textbf{39} (2013), 169--194.

\end{thebibliography}
\end{document}